\newtheorem{theorem}{Theorem}[section]
\newtheorem{lemma}[theorem]{Lemma}
\newtheorem{definition}[theorem]{Definition}
\newtheorem{example}[theorem]{Example}
\newtheorem{notation}[theorem]{Notation}
\newtheorem{remark}[theorem]{Remark}
\newtheorem{corollary}[theorem]{Corollary}
\newtheorem{theorem/definition}[theorem]{Satz/Definition}
\newcommand{\R}{\mathbb R}
\newcommand{\N}{\mathbb{N}}
\newcommand{\C}{{\cal C}}
\newcommand{\op}{\operatorname}
\newcommand{\supp}{\operatorname{supp}}
\newcommand{\ep}{\varepsilon}
\renewcommand{\(}{\left(}
\renewcommand{\)}{\right)}
\newcommand{\overbar}[1]{\mkern 1.5mu\overline{\mkern-1.5mu#1\mkern-1.5mu}\mkern 1.5mu}
\numberwithin{equation}{section}
\begin{document}

\begin{center}
{\large\bfseries
Semiconvexity of (weak) Kantorovich potentials in the Lorentzian optimal transport problem
}

\vspace{1em}

{
{\sc Alec Metsch} \\
Universität zu Köln \\
email: ametsch@math.uni-koeln.de
}

\vspace{1em}

{
\textbf{Key words:} Optimal transport, regularity of Kantorovich potentials \\[1ex]
\textbf{MSC:} 49N60, 49J30, 49Q22, 49Q20, 53C50
}
\end{center}

\begin{abstract}
    \noindent
    We study semiconvexity properties of (weak) Kantorovich potentials for the Lorentzian optimal transport problem with the standard cost function $c$. We show that, in general, this regularity – known in the Riemannian context – does not extend to the Lorentzian setting. Nevertheless, we provide a general regularity result for $c$-convex functions and show that, under suitable general assumptions on the measures, this yields semiconvexity of the (weak) potentials at least on an open set of full measure. This, in turn, allows us to conclude the existence and uniqueness of an optimal transport map.
\end{abstract}

\tableofcontents

\section{Introduction}

The quadratic Monge problem for two Borel probability measures $\mu$ and $\nu$ on $\R^n$ consists of minimizing
\begin{align}
    \inf\bigg\{\int |x-T(x)|^2\, d\mu(x)\mid T_\#\mu = \nu\bigg\}  \label{eqcb}
\end{align}
among all transport maps $T$, i.e.\ Borel maps $T:\R^n\to \R^n$ such that $T_\#\mu=\nu$. Here, $T_\#\mu$ denotes the pushforward measure of $\mu$ w.r.t.\ $T$. Brenier's celebrated theorem \cite{Brenier} states that, if $\mu$ is absolutely continuous w.r.t.\ the Lebesgue measure and the total cost \eqref{eqcb} is finite, Monge's problem admits a unique solution. Moreover, the optimal transport map $T$ is given by the gradient of a convex function. The standard proof proceeds by first finding an optimal coupling $\pi$ for the Kantorovich problem
\begin{align*}
    \inf\bigg\{\int |x-y|^2\, d\pi(x,y)\mid \pi\in \Gamma(\mu,\nu)\bigg\},
\end{align*}
and then by constructing – via the Rockafellar method – some convex function $\varphi$ satisfying
\[
    \varphi^*(y) +\varphi(x) = \langle x,y\rangle \quad \text{ for } \pi\text{-a.e. $(x,y)$,}
\]
$\varphi^\ast$ denoting the Legendre transform. Since $\varphi^\ast(y)+\varphi(x)\geq \langle x,y\rangle$, differentiating the above idenity yields $y=T(x):=\nabla \varphi(x)$ $\pi$-a.e., showing that $\pi$ is induced by the map $T$ – and thus unique. Convexity of $\varphi$ is crucial, as it ensures differentiability $\mu$-a.e.

Similarly, when the cost function $c$ is given by the squared Riemannian distance $d^2$ on a complete Riemannian manifold and $\pi$ is an optimal coupling with finite cost, the standard Rockafellar method provides a $d^2$-convex function $\varphi$ which satisfies 
\begin{align}
    \varphi^{d^2}(y)-\varphi(x) = d^2(x,y) \quad \text{ for $\pi$-a.e.\ $(x,y)$.} \label{eqbt}
\end{align}
Under suitable integrability assumptions, $(\varphi,\varphi^{d^2})$ solves the dual problem, and $\varphi$ is just a Kantorovich potential. If $\mu$ is absolutely continuous w.r.t.\ the volume measure, it was shown by McCann \cite{McCann} in the compact and by Figalli/Gigli \cite{Figalli/Gigli} in the general case that $\varphi$ inherits semiconvexity from the semiconcavity of the cost (McCann relied on Lipschitz continuity instead of semiconvexity) – at least on the “region of interest,” namely an open set of full $\mu$-measure. As in Brenier’s theorem, we may then differentiate \eqref{eqbt} to conclude the existence and uniqueness of an optimal transport map.
The same method also applies to certain classes of sufficiently regular \emph{real valued} cost functions \cite{Villani}.

More recently, originating from \cite{Eckstein/Miller}, the Lorentzian optimal transport problem has attracted considerable attention (see e.g.\, \cite{Bertrand/Puel/Pratelli,Bertrand/Puel, Kell, McCann2, Mondino/Suhr, Suhr}). However, the standard Lorentzian cost function $c$ (cf.\ \eqref{eqh}; some authors also consider $p=1$) presents substantial additional challenges, as it is significantly less regular than the squared Riemannian distance; $c$ is not even real-valued. This causes many classical results, including the standard Rockafellar construction, to break down in general.

Nevertheless, in \cite[Theorem 2.8]{Kell}, Kell and Suhr establish fairly general conditions on the measures under which a modified Rockafellar construction remains valid: for any optimal coupling $\pi$, there exists a $c$-convex function $\varphi$ such that
\[
    \varphi^c(y)-\varphi(x) = c(x,y) \quad \text{for $\pi$-a.e.\ $(x,y)$.}
\]
We refer to such a function $\varphi$ as a $\pi$-solution, or a (weak) Kantorovich potential (see Definition \ref{def4}). Under suitable integrability assumptions, the pair $(\varphi, \varphi^c)$ solves the dual problem, and $\varphi$ becomes an actual Kantorovich potential.

A natural question is whether the semiconvexity results for $\pi$-solutions (or, more generally, $c$-convex functions) extend from the Riemannian to the Lorentzian setting. Again the little regularity and non-finiteness of the cost $c$ provides many difficulties that do not arise in the Riemannian framework. Up to this point, semiconvexity has only been proven for very special situations; for instance if the pair $(\mu,\nu)$ belongs to the now well-known class of $p$-separated measures \cite{McCann2}. In these cases, semiconvexity comes as a simple consequence of the well-chosen definitions. 

The main goal of this paper is therefore to provide quite general assumptions on the measures under which any $\pi$-solution is semiconvex on the region of interest (Theorem \ref{thmc}). Combined with the assumptions of Kell/Suhr, this result yields – as in Brenier's theorem – the existence and uniqueness of an optimal transport map (Corollary \ref{cor2}).
As in the Riemannian setting, Theorem \ref{thmc} follows from a general regularity result for $c$-convex functions, which we establish. However, one cannot expect our result to be as general as its Riemannian counterpart: We provide a simple example showing that $c$-convex functions (and even Kantorovich potentials) may fail to be continuous on the interior of their domain – in contrast to the Riemannian setting.

\subsection{Setting and results}

To state our precise results, let $(M,g)$ be a smooth globally hyperbolic spacetime of dimension $n+1$, and fix $0<p< 1$. We consider the cost function
\begin{align}
    c(x,y):=
    \begin{cases}
        -d^p(x,y),\ &\text{if } (x,y)\in J^+,
        \\\\
        +\infty,\ &\text{else.}
    \end{cases} \label{eqh}
\end{align}
Here, $d$ denotes the Lorentzian distance function and $J^+$ the set of causally related pairs.
We study the optimal transport problem associated with this cost function, that is,
\begin{align*}
    C(\mu,\nu)=\inf\bigg\{\int_{M\times M} c(x,y)\, d\pi(x,y)\mid \pi \in \Gamma_{\leq}(\mu,\nu)\bigg\}\in \R\cup\{\pm \infty\},
\end{align*}
where $\mu,\nu\in {\cal P}:={\cal P}(M)$ are Borel probability measures on $M$,  $\Gamma_{\leq}(\mu,\nu)$ denotes the set of causal couplings of $\mu$ and $\nu$ (i.e.\ all Borel probability measures $\pi$ on $M\times M$ having $\mu$ and $\nu$ as marginals and such that $\pi(J^+)=1$). We adopt the convention that $\inf(\emptyset):=+\infty$.
\medskip

Rather than working with Kantorovich potentials directly, we introduce and use a weaker notion, which we call a $\pi$-solution. The motivation for this choice is twofold. First, general existence results for Kantorovich potentials typically rely on integrability conditions on the cost function that are not satisfied in the present setting. Second, for the purpose of proving existence and uniqueness of optimal transport maps, it is sufficient to work with these weaker objects.

\begin{definition}[$\pi$-solutions]\rm \label{def4}
    Let $\mu,\nu\in {\cal P}$ and $\pi\in \Gamma_\leq(\mu,\nu)$. We say that a function $\varphi:M\to \overbar \R$, $c$-convex relatively to $(M,\supp(\nu))$, is a \emph{$\pi$-solution} provided 
    \begin{align}
        \pi(\partial_c\varphi)=1. \label{eqaa}
    \end{align}
\end{definition}
For the definition of a $c$-convex function and of $\partial_c\varphi$, see Definition \ref{def1}.

\begin{definition}[Causal compactness]\rm \label{causally}
    A subset $A\subseteq M$ is called \emph{causally compact} if, for any compact set $K\subseteq M$, the sets $J^+(K)\cap A$ and $J^-(K)\cap A$ are compact.
\end{definition} 

We are now in a position to state our main result. We denote by $\operatorname{vol}_g$ the volume measure on $M$ with respect to $g$. The key ingredient in the proof of Theorem \ref{thmc} is Corollary \ref{corasd} which concerns regularity of quite general $c$-convex functions. Our argument builds on and extends the strategy of Theorem 2.12 in \cite{Kell}.

In the following, we write $I^+$ for the set of chronologically related pairs.

\begin{theorem}[Semiconvexity of $\pi$-solutions]\label{thmc}
    Let $\mu,\nu\in {\cal P}$. Suppose that
    \begin{enumerate}[(a)]
        \item $\supp(\nu)$ is causally compact
        \item $\mu\ll \op{vol}_g$,
        \item $\supp(\mu)\cap \supp(\nu)=\emptyset$.
    \end{enumerate}
    Suppose that $\pi\in \Gamma_\leq(\mu,\nu)$ admits a $\pi$-solution $\varphi:M\to \overbar \R$. Then $\pi(I^+)=1$ and $\varphi$ is locally semiconvex on an open set of full $\mu$-measure.
\end{theorem}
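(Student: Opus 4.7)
The proof splits into the two conclusions of the theorem. For $\pi(I^+) = 1$, I would adapt the argument of Kell--Suhr (\cite{Kell}, Theorem 2.12, which is stated for $p=1$) to $p \in (0,1)$. The passage is essentially formal: that proof assumes, for contradiction, $\pi(\partial J^+) > 0$, extracts a point $(x_0, y_0) \in \supp(\pi) \cap \partial J^+$, and derives a contradiction from $\mu \ll \op{vol}$, $\supp(\mu) \cap \supp(\nu) = \emptyset$, and the $\pi$-solution identity on $\partial_c\varphi$. Since $d^p$ and $d$ vanish together on $\partial J^+$, the $c$-cyclical monotonicity inequalities used there simply rescale by the exponent $p$, and the argument carries over unchanged.

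For the semiconvexity, the key observation is that whenever $y_0 \in \partial_c\varphi(x_0) \cap \supp(\nu)$ with $(x_0, y_0) \in I^+$ not a cut pair, the function $h_{y_0}(x) := \varphi^c(y_0) + d^p(x, y_0)$ is smooth in a neighborhood of $x_0$, and by $c$-convexity of $\varphi$ satisfies $h_{y_0} \leq \varphi$ there with equality at $x_0$. Thus $\varphi$ is touched from below at $x_0$ by a $C^\infty$-function. Setting
\[
    U := \op{int}\bigl\{x \in M : \exists\, y \in \partial_c\varphi(x) \cap \supp(\nu),\, (x,y) \in I^+,\, (x,y) \text{ not a cut pair}\bigr\},
\]
the semiconvexity of $\varphi$ on $U$ will follow by upgrading this pointwise touch-down to a uniform one on neighborhoods. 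That $\mu(M \setminus U) = 0$ I would establish from $\pi(I^+)=1$ together with a $\mu$-measurable selection $x \mapsto y_x \in \partial_c\varphi(x) \cap I^+(x)$ (via disintegration of $\pi$ with respect to $\mu$, with measurability of $\varphi$ supplied by Theorem \ref{thmb}(ii)), combined with the fact that the Lorentzian cut locus from a fixed point is countably $n$-rectifiable, hence $\op{vol}$-null; since $\mu \ll \op{vol}$, the set of $x$'s admitting no non-cut witness is then $\mu$-null.

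To upgrade the pointwise touch-down to local semiconvexity on $U$, fix $x_0 \in U$ and a compact neighborhood $\overline V_0 \ni x_0$. Causal compactness of $\supp(\nu)$ makes $K_0 := \supp(\nu) \cap J^+(\overline V_0)$ compact, so $\varphi|_{V_0}(x) = \sup_{y \in K_0} \varphi^c(y) - c(x,y)$. On the open subset of $V_0 \times K_0$ consisting of non-cut timelike pairs, $d^p$ is smooth with Hessians in $x$ uniformly bounded on compact subsets, and a covering argument then realises $\varphi|_{V_0}$ on a subneighborhood of $x_0$ as a pointwise supremum of uniformly $C^2$-bounded touch-downs, yielding local semiconvexity. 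The main obstacle is that nearby $x$'s need not inherit non-cut witnesses from those of $x_0$; overcoming this requires combining the measurable selection with the rectifiability of the cut locus to guarantee that the open set of $x$'s admitting non-cut witnesses with \emph{uniformly} controlled Hessians is of full $\mu$-measure, which is the crux of the regularity argument.
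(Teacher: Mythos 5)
There is a genuine gap in the semiconvexity part of your proposal, and it concerns the one place where you yourself flag the "crux." Your strategy is to avoid the Lorentzian cut locus in order to touch $\varphi$ from below by smooth functions $d^p(\cdot,y)$ with uniformly controlled Hessians. This focus is both unnecessary and insufficient. It is unnecessary because the paper never needs smoothness of $d^p(\cdot,y)$: local semiconcavity of $c$ on $(0,\infty)\times I^+$ (Theorem \ref{A}(a)) holds across cut pairs and is all that is used, so cut pairs cost nothing. It is insufficient because the true obstruction is not the cut locus but accumulation of competing $y$'s at the light cone $\partial J^+(x)$. For $p<1$, the Hessian of $d^p(\cdot,y)$ in $x$ blows up as $d(x,y)\to 0$, so a family of timelike non-cut pairs carries uniform Hessian control only if $d(x,y)$ is also bounded away from zero; restricting to $K_0=\supp(\nu)\cap J^+(\overline V_0)$ does not achieve this, since $K_0$ contains points on and near $\partial J^+(x)$.

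The missing ingredient is exactly the paper's central technical result, Theorem \ref{thmai}: for $\op{vol}$-a.e.\ $x\in \op{int}(\op{dom}(\varphi))\cap A^c$ there is $\delta>0$ with
\[
\sup\{\psi(y)-c(x,y)\mid y\in A,\ d(x,y)\leq \delta\}<\varphi(x),
\]
proved by a quantitative $c$-cyclical-monotonicity argument in the spirit of Kell--Suhr's Theorem 2.12 but sharpened to produce this strict gap. Causal compactness of $\supp(\nu)$ then lets one make $\delta$ uniform over a neighborhood of $\mu$-a.e.\ point (Lemmas \ref{mainnn} and \ref{thma}), so that locally $\varphi=\sup_{y\in K}(\psi(y)-c(\cdot,y))$ with $K$ compact and $V\times K\subseteq I^+$; local semiconcavity of $c$ on $I^+$ then yields local semiconvexity of $\varphi$ with no reference to cut loci. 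Your proposed tools (measurable selection plus countable $n$-rectifiability of the Lorentzian cut locus, which you would also need to justify) do not deliver the quantitative lower bound on $d(x,y)$, which is where the argument lives. As a secondary point, the paper obtains $\pi(I^+)=1$ as a byproduct of this same estimate (via Lemma \ref{thma}) rather than as a separate first step; adapting Kell--Suhr directly for that conclusion, as you suggest, would also work but is not sufficient for the semiconvexity claim, because concentration of $\pi$ on $I^+$ does not by itself prevent the supremum defining $\varphi$ from being nearly attained arbitrarily close to the light cone.
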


Assumption (b) is as in Brenier's theorem. To illustrate why assumptions (a) and (c) are crucial, we refer to the discussion in Subsection \ref{method}. There, we also address the additional difficulties that arise in the Lorentzian framework as compared to the Riemannian setting.
\medskip

The fact that an optimal coupling $\pi$ must be concentrated on $I^+$ was proved in the special case $p=1$ by Suhr and Kell in \cite{Kell}, Theorem 2.12, under the assumption that $\mu\ll \op{vol}_g$, $\supp(\mu)\cap \supp(\nu)=\emptyset$ and that $\pi$ admits a Borel $\pi$-solution. The proof readily extends to the case $p\in (0,1]$, and it implies that $\pi$-solutions are approximately differentiable almost everywhere. Similar to the reasoning in \cite{Fathi/Figalli} or Theorem 10.38 in \cite{Villani}, this can be used to prove that the optimal coupling is unique and characterized by 
\[
    \widetilde \nabla \varphi(x) +\nabla_x c(x,y) = 0 \text{ $\pi$-a.e.}
\]
In particular, $\pi$ is induced by a map. However, it is interesting to know when the approximate gradient can be replaced by an actual gradient, which explains the interest of Theorem \ref{thmc}.
\medskip

If an optimal coupling $\pi$ admits a $\pi$-solution, Theorem \ref{thmc} together with standard arguments guarantees the existence of an optimal transport map. Uniqueness, however, is less straightforward: classically, it is proved by showing that each optimal coupling is induced by a map, which follows from Theorem \ref{thmc} only if every optimal coupling admits a $\pi$-solution. Fortunately, Theorem 2.8 (or, more precisely, its proof) in the work of Kell and Suhr \cite{Kell} shows that this holds under the following conditions:
\begin{itemize}
    \item $\supp(\mu)$ is connected, 
    \item $\mu$ and $\nu$ are \emph{strictly timelike}, i.e.\ there exists a (not necessarily optimal) coupling $\pi\in \Gamma(\mu,\nu)$ with $\supp(\pi)\subseteq I^+$,
    \item $\mu,\nu\in {\cal P}_c$ (${\cal P}_c$ denotes the probability measures with compact support).
\end{itemize}

The following corollary is thus immediate:

\begin{corollary}[Existence, uniqueness and characterization of an optimal map]\label{cor2}
    Let $\mu,\nu\in {\cal P}_c$ satisfy the assumptions of Theorem \ref{thmc} and the conditions above. Then the total cost $C(\mu,\nu)$ is finite, there exists a unique optimal coupling $\pi$, it is concentrated on $I^+$, and there exists a $\pi$-solution $\varphi$ such that $\pi$ is characterized by the equation
    \begin{align}
        \nabla \varphi(x) +\nabla_x c(x,y)=0\ \pi\text{-a.e.} \label{eqbp}
    \end{align}
    In particular, $\pi$ is induced by the map $T(x):=\nabla_x c(x,\cdot)^{-1}(-\nabla \varphi(x))$.
\end{corollary}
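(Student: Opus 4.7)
The plan is to assemble the three preceding results. Since $\dim M = n+1$ and $\mu\ll\op{vol}$, the measure $\mu$ does not charge countably $n$-rectifiable sets; combined with the compact supports and hypotheses (a)--(c), Corollary \ref{cor4}(i) yields strong duality, and Corollary \ref{cor4}(ii) shows that a causal coupling is optimal iff it is $c$-cyclically monotone. Fix any optimal $\pi$; by $c$-cyclical monotonicity and Theorem \ref{thmb}(i) it admits a $\pi$-solution $\varphi$, and Theorem \ref{thmc} then gives $\pi(I^+)=1$ together with an open set $U\subseteq M$ of full $\mu$-measure on which $\varphi$ is locally semiconvex --- hence differentiable at $\mu$-a.e.\ $x$ (as $\mu\ll\op{vol}$ and locally semiconvex functions are locally Lipschitz, so differentiable almost everywhere).

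Next I would derive the first-order condition. For $\pi$-a.e.\ $(x,y)\in I^+\cap\partial_c\varphi$, the point $x$ is a minimizer of $x'\mapsto \varphi(x')+c(x',y)$. Since $x\in I^-(y)$, the cost $c(\cdot,y)=-d^p(\cdot,y)$ is smooth in a neighborhood of $x$, and $\varphi$ is differentiable at $x$ for $\pi$-a.e.\ such pair (because the first marginal of $\pi$ is $\mu$). The first-order condition then forces
\[
    \nabla\varphi(x)+\nabla_x c(x,y)=0 \quad \pi\text{-a.e.},
\]
which is \eqref{eqbp}.

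To promote this to a map representation, I would invoke the Lorentzian twist on $I^+$: for each fixed $x$, the map $I^+(x)\ni y\mapsto \nabla_x c(x,y)=-p\,d(x,y)^{p-1}\nabla_x d(x,y)$ is injective. Indeed, on $I^+$ the time-separation is smooth with $g_x(\nabla_x d,\nabla_x d)=-1$, and $\nabla_x d(x,y)$ is dual to the initial velocity of the unique maximizing timelike geodesic from $x$ to $y$; knowing $\nabla_x c(x,y)$ one recovers $d(x,y)$ (using $0<p<1$ and the Lorentzian norm) and then this velocity, hence $y$ via the exponential map. Therefore $y=T(x):=\nabla_x c(x,\cdot)^{-1}(-\nabla\varphi(x))$ for $\pi$-a.e.\ $(x,y)$, so $\pi=(\Id,T)_\#\mu$.

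Uniqueness follows by the standard averaging trick: if $\pi_1,\pi_2$ are both optimal, so is $\pi:=\tfrac{1}{2}(\pi_1+\pi_2)$; applying the previous steps to this $\pi$ yields a map $T_\pi$ on whose graph $\pi$ is concentrated, and since $\pi_i\leq 2\pi$, each $\pi_i$ is concentrated on the same graph and, having marginal $\mu$, equals $(\Id,T_\pi)_\#\mu$. The main technical hurdle is the passage from approximate to actual gradient --- exactly what the local semiconvexity from Theorem \ref{thmc} enables --- together with verifying the twist condition on $I^+$, which is where smoothness of the time-separation makes the argument standard.
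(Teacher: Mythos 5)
Your proposal follows essentially the same route as the paper: assemble Corollary \ref{cor4} for strong duality, use Theorem \ref{thmb} to get a $\pi$-solution, Theorem \ref{thmc} for concentration on $I^+$ and local semiconvexity of $\varphi$ on a full-measure open set, then the first-order condition at the minimum, the twist on $I^+$, and the averaging trick for uniqueness.

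One claim in your write-up is false as stated and should be corrected, even though the argument survives. You twice assert that the time-separation $d(\cdot,y)$ (hence $c(\cdot,y)$) is \emph{smooth} near $x$ for $x\in I^-(y)$ and on $I^+$. In general $d$ is not smooth on $I^+$: it can fail to be differentiable where maximizing geodesics are non-unique or at conjugate points. What is true (Theorem \ref{A}, and the property the paper actually uses) is that $c(\cdot,y)$ is \emph{locally semiconcave} near $x$ when $y\in I^+(x)$. The correct way to extract \eqref{eqbp} is the one the paper takes, following (10.22) in \cite{Villani}: since $x'\mapsto\varphi(x')+c(x',y)$ is minimized at $x$, $\varphi$ is differentiable at $x$, and $c(\cdot,y)$ is locally semiconcave, the function $c(\cdot,y)$ must in fact be differentiable at $x$ and the gradients cancel. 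Similarly, the twist should be invoked as a property of $\nabla_x c(x,\cdot)$ on its domain of definition (i.e.\ at those $y$ where the derivative exists), as in \cite{McCann2}, Lemma 3.1, rather than as a consequence of global smoothness of $d$ on $I^+$. With these substitutions the proof is sound and matches the paper's.
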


Uniqueness of an optimal coupling among all couplings concentrated on $I^+$ holds under much weaker assumption (\cite{McCann2}, Theorem 7.1), where it is also shown that this coupling is induced by a map. The new part of this theorem is the characterization of the optimal coupling via equation \eqref{eqbp} (with an actual derivative). It is also interesting to note that the optimal coupling above is in general only concentrated, but not supported on $I^+$ (see Section \ref{secd}).

\subsection{Relevance of semiconvexity for Ricci curvature bounds}

Note that semiconvexity of Kantorovich potentials for the squared Riemannian distance was crucial in the celebrated characterisation of lower Ricci curvature bounds via convexity properties of the Boltzmann-Shannon entropy along Wasserstein geodesics (introduced by McCann in \cite{McCann3}) \cite{Lott/Villani, Sturm1, Sturm2} – which laid the foundation for the rapidly developing synthetic theory of lower Ricci curvature bounds in the setting of metric measure spaces \cite{Ambrosio/Mondino, Ambrosio/Gigli/Savare2, Ambrosio/Gigli/Savare1, Ambrosio/Mondino/Savare, Erbar/Kuwada/Sturm, Lott/Villani, Sturm1, Sturm2}. Indeed, it involves a nontrivial computation of the Jacobian of the optimal transport map along the transport geodesics, which crucially relies on the semiconvexity of the Kantorovich potential.

Similarly, the characterization of Ricci bounds was later extended to (globally hyperbolic) spacetimes independently by McCann \cite{McCann2} and Mondino/Suhr \cite{Mondino/Suhr}. As in the Riemannian setting, this led to the development of a synthetic theory of lower Ricci bounds for Lorentzian pre-length spaces \cite{Braun, Mondino/Cavalletti,Kunzinger/Sämann} (see also \cite{Octet,Minguzi/Suhr}). McCann's computations in \cite{McCann2} for this characterization also rely on the semiconvexity of the potentials. 
However, he only considered the case of "$q$-separated" measures – a very strong condition – as this was sufficient for his purposes and general semiconvexity results fail in broader settings.

As explained above, in this work, we aim to go beyond these restrictive assumptions by considering more general measures.

\subsection{Method and difficulties}\label{method}

As indicated above, in the Riemannian setting \cite{Figalli/Gigli}, the semiconvexity of the Kantorovich potential on the region of interest follows from the general result stating that any $d^2$-convex function $\varphi$ (with $d$ the Riemannian distance) is locally semiconvex on the interior of its domain $\op{dom}(\varphi):=\{\varphi\in \R\}$, and that the boundary of the domain is countably $(n-1)$-rectifiable ($n$ being the dimension). Indeed, if $\mu\ll \op{vol}_g$, then a region of interest is just $\op{int}(\op{dom}(\varphi))$. 

This approach is based on the methods by Villani \cite{Villani}, where the analogous result is proved for certain classes of cost functions satisfying sufficient regularity conditions. Similar techniques have been applied by Gangbo and McCann \cite{Gangbo/McCann} and McCann \cite{McCann}.
 Let us briefly outline the general approach developed in \cite{Figalli/Gigli, Villani}, whose fundamental arguments we will also follow:
\medskip

Let $M$ be an $n$-dimensional Riemannian manifold, $d$ the Riemannian distance, and $\varphi:M\to \R\cup\{\infty\}$ a $d^2$-convex function. For some function $\psi$,
\begin{align}
    \varphi(x) = \sup\{\psi(y)-d^2(x,y)\mid y\in M\} \quad \forall x\in M.
    \label{eqca}
\end{align}
Setting $\Omega:=\op{Int}(\op{dom}(\varphi))$, the strategy is to show that $\partial\Omega$ is countably $(n-1)$-rectifiable and that the $c$-subdifferential $\partial_c\varphi(U)$ is precompact for any $U\Subset \Omega$. Once this is established, for $x\in U$, we can restrict the supremum in \eqref{eqca} to the compact set $\overbar{\partial_c\varphi(U)}$; it then follows easily that $\varphi$ inherits local semiconvexity on $U$ (and hence on $\Omega$) from $-d^2$. 
\medskip

If $M$ is a globally hyperbolic spacetime of dimnesion $n+1$, $c$ is the cost function from \eqref{eqh}, and $\varphi:M\to \R\cup\{\pm\infty\}$ is any $c$-convex function, then, as in the Riemannian case, we have
 \begin{align}
    \varphi(x) = \sup\{\psi(y)-c(x,y)\mid y\in M\} \quad \forall x\in M
    \label{eqcaa}
\end{align}
for some $\psi$. It is not difficult to verify that the boundary of $\op{dom}(\varphi)$ is countably $n$-rectifiable (Lemma \ref{Figalli}).
However, in contrast to the Riemannian setting, $\varphi$ is, in general, not locally semiconvex and not even continuous on $\Omega:=\op{int}(\op{dom}(\varphi))$.

The first problem is that there exist examples showing that, in general, for $x\in U\Subset \Omega$, the supremum in \eqref{eqcaa} cannot be restricted to a compact set (Example \ref{remc}(b)) – this is a consequence of the lack of superlinearity of $c$.
For this reason, we impose the assumption of causal compactness on the target measure $\nu$: since we are only interested in $c$-convex functions which arise as $\pi$-solutions in the optimal transport problem, it suffices – provided $\supp(\nu)$ is causally compact – to consider the case where
 \begin{align}
    \varphi(x) = \sup\{\psi(y)-c(x,y)\mid y\in A\} \quad \forall x\in M.
    \label{eqcaaa}
\end{align}
for some causally compact set $A\subseteq M$ (in the optimal transport problem, one has $A=\supp(\nu)$). In other words, $\varphi$ is $c$-convex relativley to $(M,A)$.
Then for any $x\in U\Subset \Omega$, the supremum in \eqref{eqcaaa} can be restricted to the compact set $J^+(\overbar U)\cap A$.

The main difficulty in this paper is that $c$ is not locally semiconcave (or even real-valued) on $M\times M$; as a consequence, even the supremum in \eqref{eqcaaa} is, in general, not even continuous on $\Omega$, see Example \ref{remc}(a). However, $c$ is locally semiconcave on the open set of chronologically related points $I^+$. The goal is therefore to prove that there exists an open subset $\Omega_0\subseteq \Omega$ with $\op{vol}_g(\Omega\backslash \Omega_0)=0$ such that, for every $x\in U\Subset \Omega_0$, the supremum in \eqref{eqcaaa} may be restricted to a set of the form $\{y\in A\mid d(x,y)\geq \ep\}$, where $\ep>0$. In other words, the supremum must be locally uniformly bounded away from the complement of the chronological future. Once this is established, local semiconvexity on $\Omega_0$ (the region of interest if $\mu\ll \op{vol}_g$) follows as in the classical case. Assumption (c) in Theorem \ref{thmc} is required to guarantee this property.  

\subsection{Plan of the paper}

In Section \ref{sec2a}, we recall some fundamental results from Lorentzian geometry and optimal transport theory. Section \ref{secb} provides examples showing that well-known Riemannian regularity results do not extend to the Lorentzian setting. Theorem \ref{thmc}, together with an adapted general regularity result for $c$-convex functions, is proved in Section \ref{secc}. In Section \ref{secd}, we prove Corollary \ref{cor2}.

\section{Preliminaries and notation}\label{sec2a}

\textbf{For the rest of this paper}, let $p\in (0,1)$ and let $(M,g)$ be an $(n+1)$-dimensional globally hyperbolic spacetime, where the metric $g$ is taken to have signature $(-,+,...,+)$. In particular, $M$ is time-oriented. We refer to future-directed causal (resp.\ timelike) vectors simply as causal (resp.\ timelike). A curve is always assumed to be piecewise smooth if not otherwise said. In particular, a curve is referred to as causal (timelike) if it is piecewise smooth and future-directed causal (timelike). We denote by $I^+$ resp.\ $J^+$ the set of pairs $(x,y)$ which can be connected by a timelike resp.\ causal curve (with $0$ being a causal vector). The Lorentzian distance function is denoted by $d$:
\[
    d(x,y):=
    \begin{cases}
        \sup_\gamma {\ell}_g(\gamma),&\text{ if } (x,y)\in J^+,
        \\
        0,& \text{ else,}
    \end{cases}
\]
where the supremum is taken over all causal curves $\gamma:[a,b]\to M$ connecting $x$ to $y$ and the (Lorentzian) length ${\ell}_g(\gamma)$ of a causal curve $\gamma:[a,b]\to M$ is defined as
\[
    \int_a^b \sqrt{|g(\dot \gamma(t),\dot \gamma(t))|}\, dt.
\]
For $x\in M$, we denote by $\C_x\subseteq T_xM$ the cone of causal vectors. Note that $\C_x$ is closed. We also set $\C:=\{(x,v)\in TM\mid v\in \C_x\}$. We can equip $M$ with a complete Riemannian metric, which will be fixed and denoted by $h$. All balls $B_r(x)$, $x\in M$, $r>0$, are understood to be taken w.r.t.\ the metric $h$. The $h$- and $g$-norm of a tangent vector $v\in T_xM$ are denoted by $|v|_h$ and $|v|_g:=\sqrt{|g(v,v)|}$, respectively.

\subsection{Properties of the Lorentzian distance function}

\begin{notation} \rm
    We reserve the term \emph{maximize} to refer specifically to the Lorentzian length functional. That is, a maximizing curve $\gamma:[a,b]\to M$ is a causal curve that satisfies $\ell_g(\gamma)=d(\gamma(a),\gamma(b))$. The following result is well-known.
\end{notation}

\begin{theorem}[Properties of the distance function]\label{asdfghj}
    For any two points $(x,y)\in J^+$, there exists a maximizing geodesic connecting $x$ to $y$. Moreover, any maximizing curve must be a pregeodesic.
\end{theorem}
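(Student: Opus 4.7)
The plan is to invoke the classical Avez--Seifert theorem for existence and a standard first-variation argument for the pregeodesic statement, so the proof will split cleanly into two parts.

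For existence, the key inputs are all consequences of global hyperbolicity: (a) the causal diamond $J^+(x)\cap J^-(y)$ is compact, (b) $J^+$ is closed, and (c) the Lorentzian length functional $\ell_g$ is upper semicontinuous under $C^0$-convergence of causal curves. First I would take a maximizing sequence $\gamma_n$ of causal curves from $x$ to $y$ with $\ell_g(\gamma_n)\to d(x,y)$ and reparametrize each $\gamma_n$ on $[0,1]$ by constant $h$-speed. Since each $\gamma_n$ is confined to the compact set $J^+(x)\cap J^-(y)$, and because the causal cone is locally controlled by $h$, the $h$-length of a causal curve staying in a fixed compact set is bounded by a constant depending only on that set. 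Hence the $\gamma_n$ are equi-Lipschitz, and Arzelà--Ascoli yields a uniformly convergent subsequence with limit $\gamma:[0,1]\to M$. By (b), $\gamma$ is causal, and by (c),
\[
\ell_g(\gamma)\;\geq\;\limsup_{n\to\infty}\ell_g(\gamma_n)\;=\;d(x,y),
\]
so $\gamma$ is maximizing.

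For the pregeodesic property, let $\gamma:[a,b]\to M$ be any maximizing causal curve and fix an interior point $t_0\in(a,b)$. I would pick a convex normal neighborhood $U$ of $\gamma(t_0)$ together with $\varepsilon>0$ such that $\gamma([t_0-\varepsilon,t_0+\varepsilon])\subseteq U$. In such a neighborhood, any two causally related points are joined by a unique geodesic, and this geodesic is the unique maximizer (up to reparametrization) of $\ell_g$ among causal curves in $U$ connecting them; this is a direct computation in normal coordinates. If $\gamma|_{[t_0-\varepsilon,t_0+\varepsilon]}$ were not already a reparametrization of this local geodesic, replacing that segment by the geodesic segment would produce a causal curve from $x$ to $y$ strictly longer than $\gamma$, contradicting maximality. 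Since $t_0$ was arbitrary, $\gamma$ is a pregeodesic.

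The main technical obstacle is the upper semicontinuity of $\ell_g$ under $C^0$-convergence of causal curves. Unlike the Riemannian case, where length is lower semicontinuous and lower semicontinuity is essentially immediate, the Lorentzian statement is genuinely subtle: one has to rule out limits in which short timelike oscillations collapse into null segments that would drop the length. Global hyperbolicity is exactly what prevents this pathology. Since this result, together with the Avez--Seifert theorem, is textbook material in Lorentzian geometry, it can be cited rather than rederived here.
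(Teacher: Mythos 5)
The paper does not actually prove this statement: its ``proof'' is a pointer to O'Neill, Ch.\ 14, Prop.\ 19 and to Minguzzi, which together give the Avez--Seifert existence theorem and the pregeodesic property. Your sketch of the existence argument --- maximizing sequence confined to the compact diamond $J^+(x)\cap J^-(y)$, uniform $h$-Lipschitz bound for causal curves in a compact set, Arzel\`a--Ascoli, closedness of $J^+$, upper semicontinuity of $\ell_g$ --- is exactly the standard proof behind that citation and is fine. One cosmetic remark: the Arzel\`a--Ascoli limit is a priori only a Lipschitz causal curve, not piecewise smooth; it is the subsequent local identification with geodesics that supplies the regularity needed to call it a geodesic in the paper's sense, so the existence claim really rests on the pregeodesic part.

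Your argument for the pregeodesic part has a genuine gap in the null case $d(x,y)=0$, which the theorem explicitly includes since it is stated for all $(x,y)\in J^+$. In that case $\ell_g(\gamma)=0$, and the local replacement you propose does \emph{not} yield a strict increase in length: in a convex neighbourhood $U$, if $p\neq q$ and $q\in J^+(p)\setminus I^+(p)$, then the local geodesic from $p$ to $q$ and every other causal curve in $U$ from $p$ to $q$ all have Lorentzian length $0$, so ``unique maximizer of $\ell_g$'' is vacuous and no contradiction arises. The fact one actually needs is the stronger local statement from causality theory: in a convex neighbourhood, a causal curve from $p$ to $q$ that is not a reparametrized geodesic segment admits a timelike deformation with the same endpoints, hence forces $q\in I^+(p)$; equivalently, if $q\in J^+(p)\setminus I^+(p)$ with $p\neq q$, then the \emph{only} causal curve in $U$ from $p$ to $q$ is the null geodesic. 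This is a causal-structure argument (analysis of causal curves in normal coordinates), not a length-maximization one, and it is what the cited references actually use. If you phrase the local step via this dichotomy --- either the segment is a geodesic, or the endpoints become chronologically related and $\ell_g$ goes strictly positive, contradicting $\ell_g(\gamma)=d(x,y)=0$ --- the pregeodesic part goes through uniformly for both timelike and null maximizers.
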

\begin{proof}
    See \cite{ONeill}, Chapter 14, Proposition 19 and \cite{Minguzzi}, Theorem 2.9.
\end{proof}

Recall that a function defined on an open subset of a smooth manifold is called \emph{locally semiconvex/semiconcave} if it is so when computed in local coordinates.
The following theorem asserts that $d$ is locally semiconvex on the chronological future $I^+$. 

\begin{theorem}[Semiconvexity of the distance function]\label{A}
    The Lorentzian distance function is real-valued and continuous on $J^+$, and locally semiconvex on $I^+$. In particular, $c$ is locally semiconcave on $I^+$.
\end{theorem}
\begin{proof}
	See \cite{McCann2}, Proposition 3.4, or \cite{Metsch3} for the case $p=\frac 12$.
\end{proof}

\subsection{$c$-convex functions and $\pi$-solutions}

	In this section, we recall some basic definitions and notations related to $c$-convex functions and introduce the notion of a $\pi$-solution.

    The following definition is adapted from \cite{Mondino/Cavalletti}, Definition 2.22.

\begin{definition}[$c$-convexity]\rm \label{def1}
    Let $U,V\subseteq M$. A function $\varphi:U\to \overbar \R$ is said to be \emph{$c$-convex relatively to $(U,V)$} if there exists a function $\psi:V\to \overbar \R$ such that 
    \begin{align}
        \varphi(x)=\sup_{y\in V} \(\psi(y)-c(x,y)\) \text{ for all } x\in U. \label{eqbs}
    \end{align}
    Here, we use the convention $+\infty-\infty:=-\infty$.
    The $c$-transform of $\varphi$ is the function 
    \begin{align}
        \varphi^c:V\to \overbar \R,\ \varphi^c(y):=\inf_{x\in U} (\varphi(x)+c(x,y)). \label{qwedfghjki}
    \end{align}
    Here, we use the convention $-\infty+\infty:=+\infty$. It is well-known that \eqref{eqbs} holds with $\varphi^c$ instead of $\psi$ (\cite{Ambrosio}, Definition 6.1.2).
    The $c$-subdifferential of $\varphi$ is defined as
    \[
        \partial_c\varphi:=\{(x,y)\in U\times V\mid \varphi(x)\in \R,\ \varphi^c(y)\in \R,\ \varphi^c(y)-\varphi(x)=c(x,y)\}.
    \]
 	Note that $c(x,y)<\infty$ whenever $(x,y)\in \partial_c\varphi$.
\end{definition}

\begin{remark}\rm
    If $\varphi$ is $c$-convex relatively to $(U,V)$, then $\varphi$ can be extended to a $c$-convex function relatively to $(M,M)$. Indeed, let $\psi:V\to \overbar \R$ be as in the definition above, and extend $\psi$ to $M\backslash V$ by $-\infty$. If we denote this extension by $\psi'$, we can define the $c$-convex function
    \[
        \varphi'(x):=\sup_{y\in M} (\psi'(y)-c(x,y))=\sup_{y\in V} (\psi'(y)-c(x,y)),\ x\in M.
    \]
    
    Conversely, if $\varphi:M\to \overbar \R$ is $c$-convex relatively to $(M,V)$, then $\varphi_{|U}$ is $c$-convex relatively to $(U,V)$ for any $U\subseteq M$. 

    Therefore, we will only consider $c$-convex functions relatively to $(M,V)$. Sometimes we will just speak informally of a $c$-convex function and mean a $c$-convex function relatively to $(M,V)$ for some $V\subseteq M$.
\end{remark}

Recall the definition of a $\pi$-solution: 
\begin{definition}[$\pi$-solutions]\rm
     Let $\mu,\nu\in {\cal P}$ and $\pi\in \Gamma_\leq(\mu,\nu)$. We say that a function $\varphi:M\to \overbar \R$, $c$-convex relatively to $(M,\supp(\nu))$, is a \emph{$\pi$-solution} provided 
    \begin{align*}
        \pi(\partial_c\varphi)=1. 
    \end{align*}
\end{definition}

\section{Riemann vs Lorentz – Counterexamples}\label{secb}

As discussed in the introduction, for a well-behaved cost function $\tilde c$ (as in \cite{Figalli/Gigli,Villani}) defined on a Riemannian manifold, any $\tilde c$-convex function $\varphi$ is locally semiconvex on $\Omega := \operatorname{int}({\varphi < \infty})$, and its boundary $\partial \Omega$ is countably $(n-1)$-rectifiable. In \cite{Figalli/Gigli,Villani}, this result is derived from the local semiconcavity of $\tilde c$ together with the compactness of the $\tilde c$-subdifferential $\partial_{\tilde c} \varphi(K)$ for every compact set $K \Subset \Omega$.

We show that this result does not extend so generally to the Lorentzian setting. Specifically, we construct examples of simple $c$-convex functions that (i) fail to be continuous in the interior of their domain, and (ii) have an unbounded $c$-subdifferential as $x$ ranges over a compact subset of $\Omega$.

Before presenting these counterexamples, we establish the following lemma, which also constitutes the first step in \cite{Figalli/Gigli, Villani}. Notably, this is one of the few instances in which the Lorentzian framework turns out to be simpler than the Riemannian one.

\begin{lemma}[First properties of $c$-convex functions]\label{Figalli}
    Let $\varphi:M\to \overbar \R$ be a $c$-convex function relatively to $(M,V)$, $V\subseteq M$. Set $\op{dom}(\varphi)=\{x\in M\mid \varphi(x)\in \R\}$. Then $\varphi$ is locally bounded on $\op{int}(\op{dom}(\varphi))$ and $\partial\op{dom}(\varphi)$ is countably $n$-rectifiable.
\end{lemma}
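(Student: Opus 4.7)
The plan is to exploit a causal monotonicity property of $c$-convex functions that follows directly from the reverse Lorentzian triangle inequality $d(x,y)\geq d(x,x')+d(x',y)$ for $x\leq x'\leq y$. Let $\psi:V\to\overbar\R$ be a generating function for $\varphi$ as in \eqref{eqbs}. I will first prove that $\varphi$ is non-increasing along the causal order: if $x\leq x'$, then $\varphi(x)\geq\varphi(x')$. Indeed, for any $y\in V$ with $x'\leq y$, the chain $x\leq x'\leq y$ gives $d(x,y)\geq d(x',y)$; since $p>0$, this forces $c(x,y)=-d(x,y)^p\leq -d(x',y)^p=c(x',y)$ and hence $\psi(y)-c(x,y)\geq \psi(y)-c(x',y)$. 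For $y$ with $x\not\leq y$ we have $c(x,y)=+\infty$ and $\psi(y)-c(x,y)=-\infty$, so such $y$ are harmless to the supremum defining $\varphi(x)$. Taking $\sup_{y\in V,\,x'\leq y}$ yields $\varphi(x)\geq\varphi(x')$.

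For the local boundedness statement, fix $x_0\in\op{int}(\op{dom}(\varphi))$ and pick an open neighborhood $W\subseteq\op{dom}(\varphi)$ of $x_0$. Global hyperbolicity makes $I^\pm(x_0)$ open, so I may choose $x_-\in I^-(x_0)\cap W$ and $x_+\in I^+(x_0)\cap W$, both with $\varphi(x_\pm)\in\R$. The open neighborhood $U:=I^+(x_-)\cap I^-(x_+)$ of $x_0$ satisfies $x_-\leq x\leq x_+$ for every $x\in U$, whence the monotonicity above yields $\varphi(x_+)\leq\varphi(x)\leq\varphi(x_-)$ on $U$.

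For the rectifiability claim, define $A_+:=\{\varphi<+\infty\}$ and $A_-:=\{\varphi>-\infty\}$, so that $\op{dom}(\varphi)=A_+\cap A_-$ and therefore $\partial\op{dom}(\varphi)\subseteq\partial A_+\cup\partial A_-$. The monotonicity proved above immediately gives $J^+(A_+)\subseteq A_+$ and $J^-(A_-)\subseteq A_-$. I would verify that $\partial A_+$ is achronal: if $p,q\in\partial A_+$ with $p\ll q$, then either $p\in A_+$ (in which case $I^+(p)\subseteq A_+$ is an open neighborhood of $q$, contradicting $q\in\partial A_+$), or $p\notin A_+$ (in which case I pick $p_n\in A_+$ with $p_n\to p$, use openness of $I^-(q)$ to get $p_n\ll q$ for large $n$, and then $I^+(p_n)\subseteq A_+$ is again an open neighborhood of $q$). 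A symmetric argument handles $\partial A_-$. The boundary of a causal future/past set is edgeless, and edgeless achronal subsets of a globally hyperbolic spacetime are locally Lipschitz topological hypersurfaces (e.g., Penrose's proposition on achronal boundaries), hence countably $n$-rectifiable, and so is $\partial\op{dom}(\varphi)$.

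The main obstacle I expect is the last step: rigorously showing that $\partial A_\pm$ are achronal from the weak hypothesis "closed under the causal order" (as opposed to the stronger "equal to the chronological future/past of its interior" often assumed in textbook treatments) and then invoking the correct statement that edgeless achronal sets in globally hyperbolic spacetimes are locally Lipschitz hypersurfaces. Steps one and two are essentially immediate from the reverse triangle inequality and the openness of $I^\pm$, and it is precisely this one-sided inequality — together with $p>0$ — that makes the Lorentzian situation cleaner than the Riemannian one mentioned in the remark preceding the lemma.
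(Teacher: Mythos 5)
Your proof is correct, and the first two steps (the causal monotonicity $\varphi(x)\ge\varphi(x')$ for $x\le x'$, and the squeeze $\varphi(x^+)\le\varphi\le\varphi(x^-)$ on $I^+(x^-)\cap I^-(x^+)$) are literally the same as the paper's, with the minor cosmetic difference that the paper uses $\psi:=\varphi^c$ in \eqref{eqbs} while you use the given generator — both work since the double $c$-transform returns $\varphi$. Where you genuinely diverge is the rectifiability step. The paper works directly with $D^\pm=\{\varphi\in\R\cup\{\pm\infty\}\}$ (your $A_\mp$), shows that $I^+(x)$ (resp.\ $I^-(x)$) is disjoint from $\overline{\op{dom}(\varphi)}$ for $x\in\partial D^+$ (resp.\ $\partial D^-$), and then invokes the measure-theoretic cone criterion of Villani, Theorem~10.48. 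You instead observe that $A_+$ is a future set and $A_-$ a past set, prove directly that $\partial A_\pm$ are achronal, and invoke the Penrose/O'Neill theory of achronal boundaries to conclude that $\partial A_\pm$ are locally Lipschitz topological hypersurfaces. The two routes rest on the same geometric fact (an open cone at each boundary point in the complement), and the achronal-boundary phrasing is a clean Lorentzian-native alternative to the general rectifiability lemma. Two small remarks on your last step: (1) for the invocation to be airtight you should say that $\partial A_+=\partial I^+(A_+)$ is an achronal boundary in the sense of $\dot I^+[S]$ (and state explicitly that nonempty \emph{closed} edgeless achronal sets are Lipschitz hypersurfaces — without closedness and nonemptiness the statement fails trivially); the global hyperbolicity hypothesis is actually not needed here. (2) Your achronality proof of $\partial A_+$ is fine as written, and correctly handles both cases $p\in A_+$ and $p\notin A_+$.
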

\begin{proof}
     Note that $\partial \op{dom}(\varphi)=\partial D^+\cup \partial D^-$ where 
    \[
        D^\pm:=\{x\in M\mid \varphi(x)\in \R\cup\{\pm \infty\}\}.
    \]
    For $x,x',y\in M$ with $x'\in J^+(x)$, the following implication holds:
    \[
        y\in J^+(x')\Rightarrow y\in J^+(x) \text{ and } c(x,y) \leq c(x',y).
    \]
    Hence, denoting $\psi:=\varphi^c$, we have
    \begin{align}
        \varphi(x) = \sup_{y\in J^+(x)} \Big(\psi(y)-c(x,y)\Big) \geq \sup_{y\in J^+(x')} \Big(\psi(y)-c(x',y)\Big) = \varphi(x'). \label{eqbf}
    \end{align}
    Now, if $x\in \op{int}(\op{dom}(\varphi))$, choose points $x^\pm \in I^\pm(x)$ such that $\varphi(x^\pm)\in \R$. For $x'$ close to $x$, we have $x'\in I^+(x^-)\cap I^-(x^+)$, and thus
    \[
        \varphi(x') \in   (\varphi(x^+),\varphi(x^-))\subseteq \R.
    \]
    This shows that $\varphi$ is locally bounded on $\op{int}(\op{dom}(\varphi))$. 
    
    Next, if $x\in \partial D^+$, then $I^+(x)\subseteq \varphi^{-1}(-\infty)$. Indeed, let $x'\in I^+(x)$, and let $(x_k)$ be a sequence converging to $x$ with $\varphi(x_k)=-\infty$ for all $k$. Then $x'\in I^+(x_k)$ for large $k$, hence $\varphi(x')=-\infty$. Similarly, if $x\in \partial D^-$, then $I^-(x)\subseteq \varphi^{-1}(+\infty)$.
    
    In local coordinates, it is then easy to check that, whenever $x\in \partial \op{dom}(\varphi)$, there exists an open cone with apex at $x$ contained in $M\backslash \overline{\op{dom}(\varphi)}$. This proves that $\partial \op{dom}(\varphi)$ is countably $n$-rectifiable (similar to \cite{Villani}, Theorem 10.48).
\end{proof}

\begin{figure}[htbp] 
    \centering

\begin{tikzpicture}[scale = 0.8]

    \draw[->] (-5, 0) -- (5, 0); 
    \draw[->] (0, -5) -- (0, 5); 

    \fill (2,2) circle (2pt); 
    \node at (2,2.3) {$y_1$}; 
    \fill (-2, 2) circle (2pt); 
    \node at (-2, 2.3) {$y_0$}; 
    \fill (-1,-1) circle (2pt);
    \node at (-1,-1.3) {$x_0$};
    \fill (-1.3,-0.7) circle (1.5pt);
    \node at (-1.6,-0.9) {$x_k$};

    \draw[black, thick, dashed] (-2,-2) -- (2,2);
    \draw[black, thick,] (-2,2) -- (0,0);
    \draw[black, thick, dashed] (-2,2) -- (2,-2);
    \draw[black, thick] (2,2) -- (0,0);
    \draw[black, thick] (-2,2) -- (-5,-1);
    \draw[black, thick] (2,2) -- (5,-1);

\end{tikzpicture}

\caption{Discontinuity of $c$-convex functions.}
    \label{fig:meine-grafik2}
\end{figure}

\begin{example}\rm\label{remc}
    \begin{enumerate}[(a)]
    \item 
    We provide an example of a $c$-convex function in the two-dimensional Minkowski space which is not continuous on the interior of its domain.
    
    Consider the $c$-convex function
    \[
        \varphi(x):=\max\{-c(x,y_1),a-c(x,y_0)\},\ a\ll 0.
    \]

    Then $\op{int}(\op{dom}(\varphi))=I^-(y_0)\cup I^-(y_1)$ is the region below the graph shwon in Figure \ref{fig:meine-grafik2}. Consider a point $x_0\in \partial J^-(y_1)\cap I^-(y_0)\subseteq \op{int}(\op{dom}(\varphi))$ and a sequence $x_k$ as illustrated in the figure. Since $a\ll 0$, we have
    \[
        \varphi(x_0)=0 \text{ and } \lim_{x_k\to x_0} \varphi(x_k) = a-c(x_0,y_0)\ll 0.
    \]
    This shows that $\varphi$ is not continuous at $x_0$.
    \medskip 
    
    However, $\varphi$ is continuous and even locally semiconvex on $\op{int}(\op{dom}(\varphi))$, except along the two dashed lines (extended to infinity). 
    This visualizes the result of Theorem \ref{thmc}: It is easy to construct and absolutely continuous probability measures $\mu$, supported on a small neighbourhood of $x_0$, such that, if $\pi$ denotes the optimal coupling between $\mu$ and $\nu:=\frac 12 (\delta_{y_0}+\delta_{y_1})$, then $\varphi$ is a $\pi$-solution and even a Kantorovich potential. Theorem \ref{thmc} asserts that, even though $\varphi$ is not locally semiconvex on the entire ball, it is locally semiconvex on an open subset of full $\mu$-measure.
    \item 
    We provide an example of a $c$-convex function $\varphi$ in the two-dimensional Minkowski space and a compact set $K\subseteq \op{int}(\op{dom}(\varphi))$ such that $\partial_c\varphi(K)$ is unbounded, see Figure \ref{fig2}.
    
    Let $(x^0,x^1)$ denote the coordinates in $\R^2$, set
    \[
    \psi:\R^2\to \overbar \R,\ 
    \psi(y):=
    \begin{cases}
    -(y^1)^{-p},& \text{ if } y=\big(\frac 1{y^1},y^1\big),\ y^1>0,
    \\
   -\infty, &\text{ else,}
    \end{cases}
    \]
    and define the $c$-convex function
    \[
    \varphi(x):=\psi^c(x):=\sup_{y\in \R^2} (\psi(y)-c(x,y)).
    \]
    Fix $y_0:=(2,1/2)$. For $x$ in some small neighbourhood $U$ of $0$, we have $x\in I^-(y_0)$, hence $\varphi(x)>-\infty$.
    
    It is easy to check that if $x\in I^-(y)$ for some $y=((y^1)^{-1},y^1)\in \R^2$, then $(y^1)^{-1}>x^0$ and
    \begin{align}
    \psi(y)-c(x,y) 
    \leq -(y^1)^{-p}+d^p((x^0,y^1),y)
    \leq |x^0|^p. \label{eqbr}
    \end{align}
    Hence, $\varphi(x)\in \R$ for $x\in U$, and therefore $0\in \op{int}(\op{dom}(\varphi))$.
    
    Moreover, we have $\psi(y)-c(x,y)=0$ for any $x=(0,x^1)$ and $y=((x^1)^{-1},x^1)$, $x^1>0$. This, together with \eqref{eqbr}, shows that $y\in \partial_c\varphi(x)$. Consequently, $\partial_c\varphi(B_{\ep}(0))$ is unbounded for any $\ep>0$.
    \end{enumerate}
\end{example}

\begin{figure}[htbp]
        \centering 
        \begin{tikzpicture}[scale=1]

            \draw[->] (-3,0) -- (5,0);
            \draw[->] (0,-2) -- (0,5);

            \draw[black, thick] (-4,0) -- (4,0);

            \fill (0.75,1.33) circle (2pt);
            \node at (1.1,1.33) {$y_0$};

            \fill (0.3,0) circle (2pt);
            \node at (0.3,-0.3) {$x$};

            \draw[black, dashed] (0.3,0) -- (0.3,3.33);
            
            \fill (0.3, 3.33) circle (2pt);
            \node at (1.4,3.33) {$y\in \partial_c\varphi(x)$};
            
            \draw (0,0) circle (15pt);
            \node at (-0.5,-0.7) {$U$};

            \foreach \x in {0.2,0.21,...,2}{
                \pgfmathsetmacro\xnext{\x+0.01}
                \pgfmathsetmacro\px{\x}
                \pgfmathsetmacro\py{1/\x}
                \pgfmathsetmacro\npx{\xnext}
                \pgfmathsetmacro\npy{1/\xnext)}
                \draw[black, thick] (\px,\py) -- (\npx,\npy);
            }

        \end{tikzpicture}

    \caption{Unboundedness of the $c$-subdifferential.}

    \label{fig2}
    \end{figure}

\section{Regularity of $c$-convex functions}\label{secc}

Theorem \ref{thmc} is a consequence of Corollary \ref{corasd}, which concerns the regularity of $c$-convex functions.

\begin{theorem}[Structure of the $c$-subdifferential]\label{thmai}
    Let $\varphi$ be a $c$-convex function relatively to $(M,A)$ for some closed set $A\subseteq M$. Let 
    \[
        \Omega := \op{int}(\op{dom}(\varphi))\cap A^c \text{ and } \psi:=\varphi^c.
    \]
    Then for $\op{vol}_g$-a.e.\ $x\in \Omega$ there exists $\delta$ such that
    \begin{align*}
        \sup\{\psi(y)-c(x,y)\mid  y\in A,\ d(x,y)\leq \delta\}< \varphi(x).
    \end{align*}
\end{theorem}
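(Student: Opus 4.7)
The plan is to argue by contradiction, and to reduce the question to a one-dimensional monotonicity statement along timelike curves, to which Lebesgue's differentiation theorem applies. Let $E := \{x \in \Omega : \text{the conclusion fails at } x\}$ and suppose, for contradiction, that $\op{vol}(E) > 0$. The first and only substantive step is to establish the local growth estimate
\begin{equation*}
    \varphi(x') \;\geq\; \varphi(x_0) + d(x',x_0)^p \qquad (\ast)
\end{equation*}
for every $x_0 \in E$ and every $x' \in I^-(x_0)$ sufficiently close to $x_0$. The failure of the conclusion at $x_0$ produces a sequence $(y_k) \subseteq A \cap J^+(x_0)$ with $d(x_0,y_k) \to 0$ and $\psi(y_k) + d(x_0,y_k)^p \to \varphi(x_0)$, which forces $\psi(y_k) \to \varphi(x_0)$. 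For $x' \ll x_0$ the transitivity $\ll \circ \leq \,\subseteq\, \ll$ yields $y_k \in I^+(x')$ for every $k$, and the Lorentzian reverse triangle inequality $d(x',y_k) \geq d(x',x_0) + d(x_0,y_k) \geq d(x',x_0)$ together with $\varphi(x') \geq \psi(y_k) + d(x',y_k)^p$ gives $(\ast)$ upon letting $k \to \infty$.

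Given $(\ast)$, the remainder is routine. Fix a relatively compact open $U \Subset \Omega$; by Lemma \ref{Figalli}, $\varphi$ is bounded on $\overline{U}$, and (as shown in that proof) $\varphi$ is non-increasing along causal curves. For any timelike curve $\gamma : [0,T] \to \overline{U}$ parametrized by $g$-arc length one has $d(\gamma(s),\gamma(t)) \geq t-s$ for $s \leq t$, so the function $f(t) := \varphi(\gamma(t))$ is bounded and non-increasing, hence admits a finite derivative at almost every $t \in [0,T]$ by Lebesgue's differentiation theorem for monotone functions. However, for each $t$ with $\gamma(t) \in E$, applying $(\ast)$ at $x_0 = \gamma(t)$ and $x' = \gamma(t-r)$ yields $f(t-r) - f(t) \geq r^p$ for all sufficiently small $r > 0$, forcing the left derivative of $f$ at $t$ to equal $-\infty$. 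Consequently $\{t \in [0,T] : \gamma(t) \in E\}$ has one-dimensional Lebesgue measure zero. Covering $\Omega$ by countably many charts, in each of which we foliate by the integral curves of a smooth timelike unit vector field (reparametrized by $g$-arc length), Fubini produces $\op{vol}(E) = 0$, contradicting the standing assumption.

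The main obstacle is the derivation of the quantitative estimate $(\ast)$ from the merely qualitative hypothesis that the supremum defining $\varphi(x_0)$ is asymptotically attained along the lightcone of $x_0$. The interplay between the reverse triangle inequality, the openness of the chronology relation, and the exponent $p < 1$ has to be combined just right to convert the $\delta \to 0$ clause in the definition of $E$ into a uniform lower bound on $\varphi(x')$ for \emph{all} small past perturbations $x'$. Once $(\ast)$ is in place, Lebesgue's theorem supplies the one-dimensional rigidity that — via Fubini — annihilates the bad set; this gives a conceptually streamlined alternative to the explicit iterative chain used in the proof of Theorem~2.12 of \cite{Kell} whose scheme the author's approach is said to follow.
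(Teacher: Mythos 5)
Your proposal is correct in its essential mathematics and takes a genuinely different and more streamlined route than the paper. The key estimate $(\ast)$ is derived cleanly: for $x_0$ in the bad set and $x' \ll x_0$, the $c$-transform inequality $\varphi(x') \geq \psi(y_k) - c(x',y_k)$, the reverse triangle inequality $d(x',y_k) \geq d(x',x_0)$, and the limit $\psi(y_k) \to \varphi(x_0)$ together give $\varphi(x') \geq \varphi(x_0) + d(x',x_0)^p$. Since $p<1$, this forces an infinite left Dini derivative of $\varphi$ along any past-directed unit-speed timelike curve through $x_0$, and Lebesgue's monotone-differentiation theorem plus Fubini finish the job. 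By contrast, the paper works at a Lebesgue density point of (the closure of a piece of) the bad set, constructs a finite chain $(x_i,y_i)$ along a \emph{near-spacelike} direction, and proves a quantitative increment $d(x_{i+1},y_i)^2 \geq d(x_i,y_i)^2 + c(r_{i+1}-r_i)$ via parallel-transport frames and Taylor expansion, then contradicts $c$-convexity by summing the chain. Your argument replaces the delicate spacelike estimate (Lemma~\ref{lemma} and Step~4) with the elementary reverse triangle inequality by moving in a timelike direction; both proofs ultimately exploit that $r\mapsto r^p$ has infinite slope at $r=0$. The proposal is conceptually cleaner and avoids the heavy differential-geometric machinery.

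One genuine, if fixable, gap: your Fubini step requires the bad set $E$ (equivalently, $\varphi$) to be Lebesgue measurable, and you do not address this. The paper sidesteps the issue by passing to the closure $\overline{B_i}$ and perturbing back into $B_i$, whereas your argument uses Fubini directly, which fails for arbitrary non-measurable sets (Sierpi\'nski-type sets can have all line sections null yet positive outer measure). The gap can be closed by noting that $\varphi$ is non-increasing along future-directed causal curves (inequality~\eqref{eqbf} of Lemma~\ref{Figalli}), so each superlevel set $\{\varphi>a\}$ is a past set; the boundary of a past set is achronal, hence a locally Lipschitz hypersurface of vanishing $\op{vol}$-measure, which makes $\{\varphi>a\}$ Lebesgue measurable and therefore $\varphi$ measurable. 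With this observation supplied, the Fubini argument is valid and your proof is complete.
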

\begin{proof}
    In Subsection \ref{ölkjhgfdsrftz}. 
\end{proof}

\begin{corollary}[Regularity of $c$-convex functions]\label{corasd}
    Consider the hypothesis of the above theorem with $A$ assumed to be causally compact. Define $\Omega_0$ to be the open set of all $x_0\in \Omega$ such that the following holds: There exists $\delta>0$ and a neighbourhood $x_0\in U\subseteq \Omega$ such that, for all $x\in U$,
    \begin{align*}
        \sup\{\psi(y)-c(x,y)\mid y\in A,\ d(x,y)\leq \delta\}<\varphi(x).
    \end{align*}
    Then $\Omega\backslash \Omega_0$ is $\op{vol}_g$-negligible and $\varphi$ is locally semiconvex on $\Omega_0$.
\end{corollary}
\begin{proof}
    Fix $x_0\in \Omega$ for which the result from Theorem \ref{thmai} holds. We claim that
    \[
    \exists \text{ open neighbourhood $U=U(x_0)\subseteq \Omega$ of $x_0$: }\quad U\cap I^+(x_0)\subseteq \Omega_0.
    \]    
    To prove the claim, notice that we find $\delta,\ep>0$ and $y_0\in A\cap I^+(x_0)$ such that
    \begin{align*}
        \sup\{\psi(y)-c(x_0,y)\mid y\in A,\ d(x,y)\leq \delta\}\leq\varphi(x_0)-\ep \leq \psi(y_0)-c(x_0,y_0)-\frac \ep 2.
    \end{align*}
    Now suppose, by contradiction, that there is a sequence $(x_k)\subseteq I^+(x_0)$ with $x_k\to x_0$ and a sequence $(y_k)\subseteq J^+(x_k)\cap A\subseteq J^+(x_0)$ such that $d(x_k,y_k)\leq \frac{1}{k}$ and $\psi(y_k)-c(x_k,y_k)\geq \varphi(x_k)-\frac{1}{k}$.
    By the causal compactness of $A$, the sequence $(y_k)$ is precompact. Then $d(x_0,y_k)\to 0$ by the uniform continuity of $d$ on compact sets.
    Thus, for large $k$, we have $d(x_0,y_k)\leq \delta$ and, hence, 
    \begin{align*}
        \psi(y_k)-c(x_0,y_k)\leq \varphi(x_0)-\ep\leq \psi(y_0)-c(x_0,y_0)-\frac \ep 2.
    \end{align*}
    Thus, for large $k$, it follows from the uniform continuity of $c$ on compact subsets of $J^+$ (note that $y_k\in J^+(x_k)\subseteq J^+(x_0)$ and that $y_0\in J^+(x_k)$ for large $k$) that
    \begin{align*}
        \psi(y_k)-c(x_k,y_k)\leq \psi(y_0)-c(x_k,y_0)-\frac{\ep}{4}\leq \varphi(x_k)-\frac{\ep}{4}.
    \end{align*}
    This is a contradiction, and thus proves the claim.

    We now prove that $\op{vol}_g(\Omega\backslash \Omega_0)=0$. Let $\tilde \Omega\subseteq \Omega$ be the set of all $x_0$ such that the above theorem holds. We define the open set
    \begin{align*}
        \tilde \Omega_{0}:=\bigcup_{x_0\in \tilde \Omega} U(x_0)\cap I^+(x_0)\subseteq \Omega.
    \end{align*}
    Since $\Omega_0\supseteq \tilde \Omega_0$, it suffices to prove that $\op{vol}_g(\Omega\backslash\tilde \Omega_0)=0$. Suppose the contrary. Since $\op{vol}(\Omega\backslash \tilde \Omega)=0$, the $\op{vol}_g$-measurable set $B:=\tilde \Omega\backslash \tilde \Omega_0$ has positive measure, so we can find a Lebesgue point $x_0$ of $B$. But this obviously contradicts
    \[
        U(x_0)\cap I^+(x_0)\subseteq \tilde \Omega_0. 
    \]
    To conclude the proof, we now prove the statement concerning the semiconvexity. Fix $x_0\in \Omega_0$. 
    By definition of $\Omega_0$, there exists an open precompact neighbourhood $U\subseteq \Omega_0$ of $x_0$ and $\delta>0$ such that, for any $x\in U$,
    \begin{align*}
        \varphi(x)=\sup\{\psi(y)-c(x,y)\mid y\in A,\ \ d(x,y)\geq \delta\}.
    \end{align*}
    By the precompactness of $U$ and the causal compactness of $A$, the set $A\cap J^+(\overbar U)$ is compact.
    It is then easy to see that there exists an open neighbourhood $V\subseteq U$ of $x_0$ and a compact set $K\subseteq A$ with $V\times K\subseteq I^+$ such that
    \begin{align*}
        \forall\, x\in V:\ \varphi(x)=\sup\{\psi(y)-c(x,y)\mid y\in K\}.
    \end{align*}
    Since $c$ is well-known to be locally semiconcave on $I^+$ (Theorem \ref{A}), it follows that $\varphi$ is locally semiconvex on $V$ (\cite{Fathi/Figalli}, Corollary A.18). Since $x_0$ was arbitrary, $\varphi$ is locally semiconvex on $\Omega_0$.
\end{proof}

We can now prove Theorem \ref{thmc}.

\begin{proof}[Proof of Theorem \ref{thmc}] 
Let $A:=\supp(\nu)$ and
\[
    \Omega := \op{int}(\op{dom}(\varphi))\cap \supp(\nu)^c.
\]
Since $\mu \ll \op{vol}_g$ and $\supp(\mu)\cap \supp(\nu)=\emptyset$, Lemma \ref{Figalli} implies $\mu(\Omega)=1$. 
Set $\psi:=\varphi^c$ and define $\Omega_0$ as above. Since $\mu\ll \op{vol}_g$, Corollary \ref{corasd} implies $\mu(\Omega_0)=1$ (note that the causal compactness of $\supp(\nu)$ is used here).

It follows at once from the above corollary that (i) $\pi$ is concentrated on the set $(\Omega_0\times M)\cap \partial_c\varphi$, which is a subset of $I^+$, and that (ii) $\varphi$ is locally semiconvex on $\Omega_0$.
\end{proof}

\subsection{Proof of Theorem \ref{thmai}}\label{ölkjhgfdsrftz}

We have to prepare for the proof with a few simple lemmas. The first lemma is taken from \cite{Kell} and will be needed in one step in the proof of Theorem \ref{thmai}.

\begin{lemma} \label{pkspadkpa}
    Let $a,b\in \R$, $\ep>0$ and a Borel measurable set $B\subseteq [a,b]$ be given with ${\cal L}^1(B)\geq \ep (b-a)$. Then for all $k\in \N$ there exists $\{t_i\}_{1\leq i\leq k}\subseteq B$ with $t_1<... <t_k$ and $t_{i+1}-t_i\geq \frac{\ep}{2k}(b-a)$.
\end{lemma}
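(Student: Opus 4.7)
The plan is to partition $[a,b]$ into consecutive subintervals of length $\delta:=\frac{\ep}{2k}(b-a)$ (with a possibly shorter last piece) and count how many of them meet $B$. The factor $\tfrac12$ in the definition of $\delta$ is precisely what will afterwards let me pick \textquotedblleft every other\textquotedblright{} intersecting subinterval and still obtain $k$ representatives with pairwise gaps of at least $\delta$; this explains the constant $\frac{\ep}{2k}$ in the statement.

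First I would introduce the cover $I_j:=[a+j\delta,\,a+(j+1)\delta)\cap [a,b]$ for $j=0,\dots,\lceil(b-a)/\delta\rceil-1$, each of Lebesgue measure at most $\delta$. Setting $J:=\{j\mid \L^1(B\cap I_j)>0\}$, the hypothesis $\L^1(B)\geq \ep(b-a)$ together with
\[
\ep(b-a)\leq \L^1(B)=\sum_{j}\L^1(B\cap I_j)\leq |J|\,\delta=|J|\,\tfrac{\ep}{2k}(b-a)
\]
immediately yields $|J|\geq 2k$. I would then enumerate $J$ as $l_1<\dots<l_{|J|}$, keep the odd-indexed entries by putting $j_i:=l_{2i-1}$ for $i=1,\dots,k$ (so that $j_{i+1}-j_i\geq 2$), and pick any $t_i\in B\cap I_{j_i}$, which is nonempty by definition of $J$. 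The inclusions $t_i<a+(j_i+1)\delta$ and $t_{i+1}\geq a+j_{i+1}\delta$ then give
\[
t_{i+1}-t_i\geq (j_{i+1}-j_i-1)\,\delta\geq \delta=\tfrac{\ep}{2k}(b-a),
\]
as required.

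I do not anticipate any genuine obstacle here: the argument is a pure measure-theoretic pigeonhole, with the only structural idea being that the factor $2$ in the definition of $\delta$ converts the \textquotedblleft occupied bin\textquotedblright{} count $|J|\geq 2k$ into a $2$-separated subset of size $k$, which is exactly what guarantees the gap $\geq \delta$ between consecutive chosen points. The only tiny care needed is to work with $\L^1(B\cap I_j)>0$ rather than merely $B\cap I_j\neq\emptyset$, so that the measure estimate summing over $J$ goes through verbatim, but $\L^1(B\cap I_j)>0$ of course forces $B\cap I_j\neq\emptyset$, so the selection of $t_i$ is legitimate.
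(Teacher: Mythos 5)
Your argument is correct. Since the paper itself provides no proof and simply cites Lemma 3.3 of Kell--Suhr, I cannot compare word for word, but your pigeonhole argument is the natural (and almost certainly the intended) one: partition $[a,b]$ into bins of width $\delta = \tfrac{\ep}{2k}(b-a)$, observe that at least $2k$ bins carry positive measure of $B$, and take every other occupied bin to enforce the gap. Every step checks out — the measure estimate $\ep(b-a)\leq\sum_j\L^1(B\cap I_j)\leq |J|\delta$ is valid because the $I_j$ are pairwise disjoint and cover $[a,b]$ up to the single point $b$, the passage from $\L^1(B\cap I_j)>0$ to $B\cap I_j\neq\emptyset$ is what legitimizes choosing $t_i$, and the gap computation $t_{i+1}-t_i>(j_{i+1}-j_i-1)\delta\geq\delta$ holds since $j_{i+1}-j_i\geq 2$. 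In fact the strict inequality in $t_i<a+(j_i+1)\delta$ gives you $t_{i+1}-t_i>\delta$, slightly stronger than required; this is harmless.
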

\begin{proof}
    \cite{Kell}, Lemma 3.3.
\end{proof}

\begin{definition}[Convex sets]\rm \label{convex}
    We call an open set $U\subseteq M$ \emph{convex} if there exists an open set $\Omega\subseteq TM$ such that, for all $x\in U$ the set $\Omega_x:=\{v\in T_xM\mid (x,v)\in \Omega\}$ is star-shaped and $\exp_x:\Omega_x\to U$ is a diffeomorphism.
\end{definition}

\begin{remark}\rm \label{remb} 
    In a convex set, there exists a unique geodesic (up to reparametrization) between any two points that lies entirely within the set.
    It is also well known that every point has arbitrarily small convex neighbourhoods \cite{ONeill}, Chapter 5, proof of Proposition 7.
\end{remark}

\begin{lemma}[An orthonormal frame]\label{frame}
    Let $U$ be a convex set. Then there exist smooth maps $e_i:U\times U\to TU$ ($i=0,...,n)$ with the following properties:
    \begin{enumerate}[(i)]
        \item
        For all $x,y\in U$, the set $\{e_0(x,y),...,e_n(x,y)\}$ is an orthonormal basis of the tangent space $T_yM$ such that $e_0(x,y)$ is timelike.
        \item
        For all $i=0,...,n$ and all $x,y\in U$ the tangent vector $e_i(x,y)$ arises as the parallel transport $V$ along the unique (up to reparametrization) geodesic in $U$ from $x$ to $y$ with $V(0)=e_i(x,x)$.
    \end{enumerate}
\end{lemma}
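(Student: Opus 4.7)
The plan is to construct the frame in two stages: first on the diagonal $\{(x,x) : x \in U\}$ by parallel-transporting a single reference orthonormal frame along radial geodesics from a distinguished base point, and then to extend to all of $U \times U$ by parallel-transporting along the unique geodesic connecting $x$ to $y$. Concretely, I would fix $x_0 \in U$ and choose an orthonormal basis $\{f_0, \ldots, f_n\}$ of $T_{x_0}M$ with $f_0$ future-directed timelike. By convexity, for each $x \in U$ there is a unique geodesic $\sigma_x : [0,1] \to U$ from $x_0$ to $x$, namely $\sigma_x(t) = \exp_{x_0}(t \exp_{x_0}^{-1}(x))$; I define $e_i(x,x)$ to be the parallel transport of $f_i$ along $\sigma_x$. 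Then, for each pair $(x,y) \in U \times U$, I let $\gamma_{x,y}(t) := \exp_x(t \exp_x^{-1}(y))$ be the unique geodesic in $U$ from $x$ to $y$ and define $e_i(x,y)$ to be the parallel transport of $e_i(x,x)$ along $\gamma_{x,y}$. Property (ii) is built into this definition.

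The required orthonormality in (i) is automatic, since parallel transport is a linear isometry between tangent spaces. That $e_0(x,y)$ remains timelike follows from the fact that parallel transport is a continuous one-parameter family of linear isometries, so the image of a future-directed timelike vector cannot leave the (future) timelike cone: the two connected components of the timelike cone cannot be joined by a continuous path through vectors of constant nonzero $g$-norm.

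The only substantive point is smoothness. The key observation is that on the convex set $U$, the combined exponential map $(x,v) \mapsto (x, \exp_x(v))$ is a diffeomorphism from $\Omega$ onto $U \times U$ (its componentwise inverses are smooth, and the Jacobian is invertible by the diffeomorphism assumption on each fibre), so $(x,y) \mapsto \exp_x^{-1}(y) \in T_xM$ depends smoothly on both arguments. Consequently $(x,y,t) \mapsto \gamma_{x,y}(t)$ and its velocity field depend smoothly on $(x,y,t)$, and $e_i(x,y)$ is the time-$1$ value of the linear parallel-transport ODE $\nabla_{\dot\gamma_{x,y}(t)}V(t) = 0$ with initial datum $V(0) = e_i(x,x)$ along a smoothly varying curve. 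Smooth dependence of ODE solutions on initial conditions and parameters then yields smoothness of $(x,y) \mapsto e_i(x,y)$, and the same argument applied to the family $\sigma_x$ takes care of $x \mapsto e_i(x,x)$. I do not expect any genuine obstacle here; the only mildly technical ingredient is the joint smoothness of $\exp^{-1}$ on $U \times U$, which is a routine consequence of the inverse function theorem applied to the combined exponential map.
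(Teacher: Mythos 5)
Your construction is exactly the one in the paper: fix a base point $x_0 \in U$, parallel-transport a reference orthonormal frame along the radial geodesic to obtain $e_i(x,x)$, then parallel-transport again along the geodesic from $x$ to $y$ to get $e_i(x,y)$. You supply more detail on the smoothness and on why $e_0$ stays future-timelike, but the argument is the same.
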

\begin{proof}
    Choose $x_0\in U$ and an orthonormal basis $e_0,...,e_n$ of $T_{x_0}M$. Now define $e_i(x)$ as the parallel transport of $e_i$ along the (unique) geodesic in $U$ from $x_0$ to $x$. Now define $e_i(x,y)$ as the parallel transport of $e_i(x)$ along the (unique) geodesic in $U$ from $x$ to $y$. For more details, see the appendix in \cite{Metsch1}.
\end{proof}

\begin{corollary}\label{cor}
    Let $U$ and $e_i$ be as above. Let $x,y\in U$ and denote by $\gamma:[0,1]\to U$ the unique geodesic connecting $x$ and $y$.
    Let $\exp_{x}^{-1}(y)=\dot \gamma(0)=:\sum_{i=0}^n \lambda_i e_i(x,x)$. Then it holds
    \begin{align*}
        \exp_y^{-1}(x)=-\dot \gamma(1)=\sum_{i=0}^n -\lambda_i e_i(x,y).
    \end{align*}
\end{corollary}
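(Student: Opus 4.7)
The plan is to prove the corollary in two separate steps, one for each equality.

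For the first equality, $\exp_y^{-1}(x) = -\dot\gamma(1)$, I would simply consider the reparametrized curve $\tilde \gamma:[0,1]\to U$ defined by $\tilde\gamma(t) := \gamma(1-t)$. Since the class of geodesics is closed under affine reparametrization, $\tilde\gamma$ is again a geodesic, and it lies in $U$ (as $\gamma$ does), connects $y$ to $x$, and has initial velocity $\dot{\tilde\gamma}(0) = -\dot\gamma(1)$. By the definition of the exponential map, it follows that $\exp_y(-\dot\gamma(1)) = \tilde\gamma(1) = x$. Since $U$ is convex in the sense of Definition \ref{convex}, $\exp_y$ is a diffeomorphism onto $U$, so this identifies $-\dot\gamma(1) = \exp_y^{-1}(x)$.

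For the second equality, $-\dot\gamma(1) = \sum_{i=0}^n -\lambda_i\, e_i(x,y)$, the key observation is that the tangent vector field $\dot\gamma$ along $\gamma$ is itself parallel along $\gamma$, since $\gamma$ is a geodesic. Writing $P_\gamma : T_xM \to T_yM$ for the parallel transport along $\gamma$ from $x$ to $y$, this yields $P_\gamma(\dot\gamma(0)) = \dot\gamma(1)$. On the other hand, by Part (ii) of Lemma \ref{frame}, $e_i(x,y)$ is exactly $P_\gamma(e_i(x,x))$ for each $i=0,\dots,n$. Since parallel transport is a linear isomorphism of tangent spaces, applying $P_\gamma$ to both sides of $\dot\gamma(0) = \sum_{i=0}^n \lambda_i e_i(x,x)$ gives
\[
    \dot\gamma(1) = \sum_{i=0}^n \lambda_i\, e_i(x,y),
\]
and multiplying by $-1$ and combining with the first step completes the proof.

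There is no real obstacle here; both assertions are formal consequences of the defining property of the frame $\{e_i(x,\cdot)\}$ as parallel transports and of the fact that a geodesic's velocity is parallel along itself. The only minor care is ensuring that the geodesic used in defining $e_i(x,y)$ from $e_i(x,x)$ is the \emph{same} geodesic $\gamma$ appearing in the statement — this is guaranteed by Remark \ref{remb}, which asserts uniqueness of the connecting geodesic within $U$.
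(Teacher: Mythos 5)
Your proof is correct, and it takes a slightly different (and arguably cleaner) route for the second equality than the paper does. The paper recovers each coefficient $\mu_j$ of $-\dot\gamma(1)$ in the orthonormal frame $\{e_j(x,y)\}$ by showing that the scalar function $t \mapsto g_{\gamma(t)}(-\dot\gamma(t), e_j(x,\gamma(t)))$ is constant along $\gamma$ (both arguments are parallel and the connection is metric-compatible), and then carefully bookkeeps the sign convention in the orthonormal expansion for the timelike index $j=0$ versus the spacelike ones. You instead apply the parallel transport $P_\gamma$ as a \emph{linear map} directly to the coordinate expression $\dot\gamma(0) = \sum_i \lambda_i e_i(x,x)$, using that $P_\gamma(\dot\gamma(0)) = \dot\gamma(1)$ because a geodesic's velocity field is parallel, and that $P_\gamma(e_i(x,x)) = e_i(x,y)$ by Lemma \ref{frame}(ii). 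This sidesteps the componentwise metric computation entirely and avoids the sign subtlety for the timelike component. Both arguments rest on the same two facts (velocity of a geodesic is parallel; the frame is defined by parallel transport), so the difference is presentational rather than conceptual, but your version is the more streamlined of the two.
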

\begin{proof}
    The first claimed equality follows from the definition of the exponential map. For the second, observe that we can write
    \begin{align*}
        -\dot \gamma(1)=\sum_{i=0}^n \mu_i e_i(x,y) \text{ for } 
        \begin{cases}
            \mu_0=-g_y(-\dot \gamma(1),e_0(x,y)),\ &\text{and }
            \\\\
            \mu_i=g_y(-\dot \gamma(1),e_i(x,y)),\ &i\geq 1.
        \end{cases}
    \end{align*}
    Let $i\geq 1$. By construction of $e_i$, $V(t):=e_i(x,\gamma(t))$ is a parallel vector field along $\gamma$. Hence, since the Levi-Civita connection is compatible with the metric, and since $V$ and $\dot \gamma$ are parallel,
    \begin{align*}
        \frac{d}{dt} g_{\gamma(t)}(-\dot \gamma(t),V(t))
        =
        g_{\gamma(t)}\(-\frac{D\dot \gamma}{dt}(t),V(t)\)+g_{\gamma(t)}\(-\dot \gamma(t),\frac{DV}{dt}(t)\)=0.
    \end{align*}
    This shows that
    \begin{align*}
        \mu_i=g_y(-\dot \gamma(1),e_i(x,y))=g_x(-\dot \gamma(0),e_i(x,x))=-\lambda_i.
    \end{align*}
    Analogously, $\mu_0=-\lambda_0$ and we conclude the proof of the corollary.
\end{proof}

\begin{definition}[Spacelike projection]\rm\label{lkjh}
    Let $U$ and $e_i$ be as above.
    We define the smooth projection
    \begin{align*}
        \pi:TU\to TU,\ \(x,\sum_{i=0}^n \lambda_i e_i(x,x)\)\mapsto \(x,\sum_{i=1}^n \lambda_i e_i(x,x)\).
    \end{align*}
    With some abuse of notation we will also write $\pi$ for the corresponding mapping on the tangent spaces $\pi:T_xM\to T_xM$, $x\in U$.
\end{definition}

The following technical lemma provides some uniform estimates which we will need in the proof of Theorem \ref{thmai}.

\begin{lemma}[Uniform estimates] \label{lemma}
    Let $x_0\in M$.  Then we can find an open neighbourhood $U$ of $x_0$ with the following properties:
    \begin{enumerate}[(i)]
        \item
        $U$ is convex and there exists an open convex set $V$ with $U\Subset V$. 
        \item
        The future pointing causal geodesics which lie in $V$ are the unique (up to reparametrization) maximizing curves connecting their endpoints.
        \item
        Fix some Riemannian metric $\tilde h$ on $TM$, and let $e_i$ be a smooth orthonormal frame as above w.r.t.\ $V$. There exists a constant $C>0$ such that:
        \begin{enumerate}[(1)]
            \item $\op{Lip}^{\tilde h}(\pi_{|T^1U})\leq C$
    
            \item
            $d_h(U,\partial V)> C^{-1}$

            \item 
            For all $x\in U$, $y\in B_{C^{-1}}(x)$ and $j,l=0,...,n$, denoting $\bar e_j:=e_j(x,y)\in T_yM$ and $e_j:=e_j(x,x)\in T_xM$, we have
            \begin{align*}
                |g(\bar e_j,\bar e_l)-
                g(d_{x}\exp_{y}^{-1}(e_j), \bar e_l)|
                \leq
                \frac{1}{2(5n+1)}.
            \end{align*}
        \end{enumerate} 

        \item 
        There exists another constant $\tilde C>0$ such that, for all $x\in U$ and $y\in V$ with $d_h(x,y)=C^{-1}$, we have
        \begin{align*}
            |\exp_x^{-1}(y)|_h\leq \tilde C \text{ and } \sum_{j=0}^n \mu_j^2\geq \tilde C^{-1},
        \end{align*}
        where
        \begin{align*}
            \exp_x^{-1}(y)=\sum_{j=0}^n \mu_j e_j(x,x).
        \end{align*}
        
        \item Let
        \begin{align*}
            &k:\op{dom}(k):=V\times \{(x,v,w)\in T^2V\mid \exp_x(v)\in V\}\to \R,\ 
            \\[10pt]
            &(y,(x,v,w))\mapsto 
            g_y(d_v(\exp_y^{-1}\circ \exp_x)(w),\exp_y^{-1}(\exp_x(v))).
        \end{align*}
        Then there is a small $\ep>0$ such that the set
        \begin{align*}
            \overline B_{C^{-1}}(U)\times \{(x,v,w)\in T^2V\mid \ &|v|_h\leq \ep,\ d_{\tilde h}((x,w),\pi(x',w'))\leq \ep 
            \\
            &\text{ for some } (x',w')\in TU,\ |w'|_h=1\}
        \end{align*}
        is compact and contained in $\op{dom}(k)$. In particular, there exists a modulus of continuity $\omega$ such that
        \begin{align*}
            |k(y,(x,v,w))-k(y,(\tilde x,0,\tilde w))| 
            \leq 
            \omega(|v|_h+d_{\tilde h}((x,w),(\tilde x,\tilde w)))
        \end{align*}
        whenever $(y,(x,v,w))$ and $(y,(\tilde x,0,\tilde w))$ belong to this set.
    \end{enumerate}
\end{lemma}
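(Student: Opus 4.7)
The plan is to construct $U$ by a sequence of shrinkings of an initial convex neighbourhood, verifying each of the listed properties at each step by standard compactness/continuity arguments. First, invoke the standard fact (O'Neill, Chapter 5, Proposition 7, cf.\ Remark \ref{remb}) that $x_0$ admits arbitrarily small convex neighbourhoods, and choose such a convex neighbourhood $V$. Up to shrinking $V$, the future-directed causal geodesics contained in $V$ are the unique maximizing curves between their endpoints: inside a sufficiently small totally normal neighbourhood one can argue via the standard Gauss lemma computation (equivalently, by Proposition 19 of \cite{ONeill}, Chapter 14, applied on $V$ itself). Now pick any convex neighbourhood $U$ of $x_0$ with $U \Subset V$, which automatically gives $d_h(U,\partial V)>c_0$ for some $c_0>0$, i.e.\ (iii)(2) once $C$ is fixed below.

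For (iii)(1), fix the smooth frame $e_i$ on $V$ given by Lemma \ref{frame}; then $\pi:TV\to TV$ is smooth by Definition \ref{lkjh}, hence its restriction to the compact set $T^1\overline U$ is Lipschitz in $\tilde h$ with some constant that we absorb into $C$. For (iii)(3), observe the key identity at $y=x$: since $\bar e_j(x,x)=e_j(x,x)=e_j$ and $d_x\exp_x^{-1}=\op{Id}_{T_xM}$, the quantity
\[
    g_y(\bar e_j,\bar e_l) - g_y\bigl(d_x\exp_y^{-1}(e_j),\bar e_l\bigr)
\]
vanishes identically when $y=x$. The map $(x,y)\mapsto$ (this quantity) is smooth on a neighbourhood of the diagonal in $V\times V$, hence vanishes continuously as $y\to x$ uniformly over $x\in\overline U$ by compactness; so for a sufficiently large $C$, the bound $d_h(x,y)\leq C^{-1}$ forces the expression to be at most $(2(5n+1))^{-1}$. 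Fix such a $C$.

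For (iv), by (iii)(2) the compact set $\{(x,y)\in\overline U\times V:d_h(x,y)=C^{-1}\}$ lies strictly inside $V\times V$. The map $(x,y)\mapsto\exp_x^{-1}(y)$ is smooth on this compact set, so $|\exp_x^{-1}(y)|_h$ is uniformly bounded above. For the lower bound on $\sum_j\mu_j^2$, note that $\exp_x^{-1}(y)\neq 0$ whenever $y\neq x$, so $\sum_j\mu_j^2$ (which is the squared $h$-norm of $\exp_x^{-1}(y)$ expressed in the basis $e_j(x,x)$, up to a positive factor coming from the change of basis between the orthonormal frame for $h$ and the $g$-orthonormal frame $e_j(x,x)$) is strictly positive on the sphere; uniform positivity by compactness of $\{(x,y):x\in\overline U,d_h(x,y)=C^{-1}\}$ yields $\tilde C$.

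The main technical step is (v); this is where the compactness bookkeeping becomes delicate. The first factor $\overline B_{C^{-1}}(U)$ is compact in $M$ since $h$ is complete, and by (iii)(2) it is contained in $V$. In the second factor, $|v|_h\leq\ep$ is obviously bounded, and the constraint on $(x,w)$ places it in the closed $\ep$-neighbourhood (w.r.t.\ $\tilde h$) of the image $\pi\bigl(\{(x',w')\in T\overline U:|w'|_h=1\}\bigr)$, which is compact in $TV$ by continuity of $\pi$ and compactness of the base. For $\ep$ small enough, this $\ep$-neighbourhood is relatively compact in $TV$, so the whole set is compact. To check it lies in $\op{dom}(k)$, use that $x\in V$ and $\exp_x(v)\in V$: since $|v|_h\leq\ep$ and $\exp$ is continuous with $\exp_x(0)=x$, for $\ep$ sufficiently small $\exp_x(v)$ stays in a pre-chosen compact subset of $V$; moreover, we need $\exp_y^{-1}\circ\exp_x$ to be defined on the relevant portion of $T_xV$, which again holds on a compact uniform neighbourhood since $V$ is convex (so $\exp_y^{-1}$ is defined on $V$) and $\exp_x(v)\in V$. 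Finally, $k$ is continuous on this compact set, so it admits a modulus of continuity, which is the asserted $\omega$. The main obstacle throughout is bookkeeping the order of shrinkings so that the finitely many smallness requirements (the bound $\tfrac1{2(5n+1)}$ in (iii)(3), the positivity of $\sum\mu_j^2$ in (iv), and the inclusion $\exp_x(v)\in V$ in (v)) are met simultaneously by a single pair $(U,V)$; since each requirement is a finite intersection of open conditions, this is possible.
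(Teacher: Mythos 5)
Your overall approach matches the paper's: both proofs are exercises in compactness and continuity, and items (i), (iii), (iv) and (v) are handled by essentially the same arguments (smoothness of the frame, vanishing of a smooth function on the diagonal plus uniform continuity for (iii)(3), precompactness of $\overline{B}_{C^{-1}}(U)$ for (iv), and compactness of the unit tangent bundle over $\overline{U}$ together with continuity of $\pi$ for (v)). The only substantive issue is your treatment of (ii).

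For (ii) you argue that, after shrinking $V$ to a totally normal neighbourhood, the Gauss lemma shows the causal geodesic is the longest causal curve between its endpoints, and you add that this is ``equivalently'' Proposition 19 of \cite{ONeill}, Chapter 14, applied to $V$. Neither step actually delivers what the lemma asserts. The lemma claims that a causal geodesic $\gamma\subseteq V$ is the \emph{unique} maximizer among \emph{all} causal curves in $M$ with the same endpoints, where ``maximizing'' is with respect to the time separation $d$ of the ambient spacetime $(M,g)$. The Gauss-lemma computation only rules out longer causal curves \emph{staying inside} the normal neighbourhood; a priori a longer causal curve could exit $V$ and re-enter. O'Neill's Proposition 19 is an existence statement for maximizing geodesics between causally related points in a globally hyperbolic spacetime; applying it ``on $V$ itself'' gives you a maximizer \emph{within} $V$ but says nothing about curves in $M\setminus V$. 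What closes this hole is the strong causality of $M$ (which holds because $M$ is globally hyperbolic): shrink $V$ further so that it is causally convex, i.e.\ no causal curve meets $V$ in a disconnected set. Then every maximizing causal curve between two points of $V$ is trapped in $V$; together with Theorem \ref{asdfghj} (maximizers are pregeodesics) and the uniqueness of geodesics in a convex set (Remark \ref{remb}), this yields (ii). The paper's one-line proof of (ii) explicitly invokes strong causality for exactly this reason; your proposal omits it, and without it the step would fail in a general (non--strongly-causal) spacetime.

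One further cosmetic note on (iv): you describe $\sum_j\mu_j^2$ as ``the squared $h$-norm up to a positive factor.'' Strictly, $(\mu_j)$ are the coefficients in the $g$-orthonormal frame $e_j(x,x)$, so $\sum_j\mu_j^2$ is not literally an $h$-norm, but the argument does not need that identification: $(x,y)\mapsto\sum_j\mu_j^2$ is continuous and strictly positive on the compact set $\{x\in\overline{U},\ d_h(x,y)=C^{-1}\}$ (because $\exp_x^{-1}(y)\neq 0$ there), and compactness gives the uniform lower bound.
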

\begin{proof}
    (i) follows from Remark \ref{remb}, and (ii) follows from Remark \ref{remb}, Theorem \ref{asdfghj} and the strong causality of $M$.  (iii)(1) and (2) follow from $U\Subset V$. For (3), note that the map
    \begin{align*}
        f:V\times V\to \R,\ (x,y)\mapsto g_y(\bar e_j,\bar e_l)-g_y(d_x\exp_y^{-1}(e_j),\bar e_l),
    \end{align*}
    is smooth and $f(x,x)=0$. Thus, the claim follows from the uniform continuity of $f$ on compact subsets. Part (iv) follows immediately from the continuity of the map $V\times V\ni (x,y)\mapsto (x,\exp_x^{-1}(y))$ and from the fact that $B_{C^{-1}}(U)$ is relatively compact in $V$. For the proof of part (v) observe that the compactness (for small $\ep$) follows from the continuity of $\pi$ together with the compactness of the unit tangent bundle over $\overline U$.
\end{proof}

\begin{proof}[Proof of Theorem \ref{thmai}]
Clearly, it suffices to prove the following: If $x_0\in \Omega$, then there exists an open neighbourhood $U\subseteq \Omega$ of $x_0$ such that the statement of the theorem holds for $\op{vol}_g$-a.e.\ $x\in U$. Thus, it suffices to prove the stated property for $\op{vol}_g$-a.e.\ $x\in U$ where $U\Subset \Omega$ is as in the above lemma. For the proof, we fix an orthonormal frame $e:V\times V\to TV$ as in Lemma \ref{frame}, where $V$ is as in the above lemma. Let $\pi:TV\to TV$ be as in Definition \ref{lkjh}.
\medskip 

We argue by contradiction and assume that the set
\begin{align*}
B:=\{x\in U\mid \exists (y_k)_k\subseteq J^+(x)\cap A, d(x,y_k)\to 0, \psi(y_k)-c(x,y_k)\to \varphi(x)\}
\end{align*}
is not a null set. 
\medskip

\noindent \textbf{Step 1: The idea:} To obtain a contradiction, we will use that $\op{vol}_g(B)>0$ implies the existence of a finite sequence $(x_i,y_i)_{1\leq i\leq m}\subseteq U\times A$, $m\in \N$, with $(x_i,y_i),(x_{i+1},y_i)\in J^+$ such that
\begin{align}
\psi(y_i)-\varphi(x_i) \geq -d^p(x_i,y_i)-\frac 1m \label{eqc} \text{ and such that }
\\[5pt]
\varphi(x_1)> 1+\varphi(x_m)+\sum_{i=1}^{m-1} \Big(-d^p(x_{i+1},y_i)+d^p(x_i,y_i)\Big). \label{eqb}
\end{align}
On the other hand, the $c$-convexity of $\varphi$ together with \eqref{eqc} shows that \eqref{eqb} is impossible (\cite{Villani}, idea of proof of Theorem 5.10): Indeed, \eqref{eqc} and the $c$-convexity imply for all $i=1,...,m-1$
\[
    \psi(y_i)-\varphi(x_i) \geq -d^p(x_i,y_i)-\frac 1m \text{ and } \psi(y_i)-\varphi(x_{i+1})\leq c(x_{i+1},y_i).
\]
Note that $c(x_{i+1},y_i)=-d^p(x_{i+1},y_i)$. Thus, substracting the first inequality from the second, and adding over $i$, we obtain 
\[
    \varphi(x_1) \leq 1+\varphi(x_m)+\sum_{i=1}^{m-1} \Big(-d^p(x_{i+1},y_i)+d^p(x_i,y_i)\Big),
\]
a contradiction to \eqref{eqb}.
\medskip

\noindent \textbf{Step 2: Fixing some constants:}
Before we start with the actual proof, let us fix, once and for all, the following constants and functions:
\begin{enumerate}[(I)]
    \item 
    $\tilde h$, $C$, $\tilde C$, $\ep$, $k$, $\omega$ from Lemma \ref{lemma} such that $d_h(U,A)\geq C^{-1}$.
    \item 
    $\delta>0$ such that 
    \[
        (C+2)\, \delta\leq \ep \text{ and } \omega((C+3)\, \delta)\leq \frac{\tilde C^{-2}}{24}.
    \]
    \item $R>0$ such that $|\varphi|\leq R$ on $U$ (Lemma \ref{Figalli}).
\end{enumerate}

\noindent \textbf{Step 3: Construction of the sequence:} If $x\in B$, we can pick a sequence $(y_{x,k})_k$ as specified in the definition of $B$. By definition of $\Omega$, we have $\Omega\cap A=\emptyset$, so $y_{x,k}\neq x$. Pick maximizing geodesics $\gamma_{x,k}:[0,1]\to M$ connecting $x$ to $y_k$, and define 
\[
v_{x,k}:=\frac{\dot \gamma_{x,k}(0)}{|\dot \gamma_{x,k}(0)|_h}.
\]
Recall that $\tilde h$ from Lemma \ref{lemma} is a Riemannian metric on $TM$. Since the unit tangent bundle over $U$, $T^1U\Subset T^1V$, is precompact, we can cover it with finitely many open sets
\[
W_i\subseteq TM,\ i=1,...,N, \text{ with } \op{diam}^{\tilde h}(W_i)\leq \delta.
\]
Then,
\begin{align*}
    B\subseteq \bigcup_{i=1}^N B_i, 
\end{align*}
where
\[
    B_i:=\{x\in B\mid (x,v_{x,k})\in W_i \text{ for infinitely many $k$} \}.
\]
Since $B$ is not a null set, there is some $i=1,....,N$ such that $B_{i}$ is not a null set. Consider the closure $\overline B_{i}$. Since this set is measurable and has positive measure, we can find a Lebesgue point $x_*\in \overline B_i\subseteq \overline U$.

Let us choose a vector $v_*\in T_{x_*}M$ with $|v_*|_h=1$ such that $(x_*,v_*)\in \overline W_{i}$ and define $w_*:=\pi(v_*)$. Since $x_*$ is a Lebesgue point, we 
can apply Fubini's theorem to find a nonzero vector $u_*\in T_{x_*}M$ with $d_{\tilde h}((x_*,u_*),(x_*,w_*))\leq \delta$, along with a constant $\delta_*>0$ such that the following properties hold:
\begin{enumerate}
    \item[(IV)\textsubscript{1}]
    $\delta_* |u_*|_h \leq \delta$,
    
    \item[(IV)\textsubscript{2}]
    The exponential map $\exp_{x_*} : B_{\delta_* |u_*|_h}(0) \to V$ is a diffeomorphism onto its image,
    
    \item[(IV)\textsubscript{3}]
    The set $\{ r \in (0, \delta_*) \mid \exp_{x_*}(r u_*) \in \overline{B_i} \}$ has 1-dimensional Lebesgue measure at least $\delta_*/2$.
\end{enumerate}
Since $p\leq 1$, we can choose $m\in \N$ such that
 \begin{enumerate}
     \item[(V)] $\frac{1}{m^2}\leq \frac{\tilde C^{-1}}{2}$ and $1+2R - \frac{p\, \tilde C^{-2}\delta_*}{96} \left(\frac{1}{m^2}+\frac{\tilde C^{-2}\delta_*}{24m}\right)^\frac{p-2}2<0.$
 \end{enumerate}
 Regarding (IV)\textsubscript{3}, Lemma \ref{pkspadkpa} then yields the existence of
\begin{align}
    0\leq r_1\leq ...\leq r_m< \delta_* \text{ with }
    r_{i+1}-r_i\geq \frac{\delta_*}{4m} \text{ and } \exp_{x_*}(r_iu_*)\in \overline B_i. \label{hjs}
\end{align}
Since the exponential map in (IV)\textsubscript{2} is a diffeomorphism, for each $1\leq i\leq m$, we can find $r_iu_*\approx u_i\in T_{x_*}M$ with
\begin{align}
    u_i\in B_{\delta_*|u_*|_h}(0),\  d_{\tilde h}\((x_*,u_*),\Big(x_*,\frac{u_{i+1}-u_i}{r_{i+1}-r_i}\Big)\)\leq \delta \text{ and } \exp_{x_*}(u_i)\in B_i. \label{eqbh}
\end{align}
 For each $i$, define 
\[
x_i:=\exp_{x_*}(u_i)\in B_i. 
\]
By definition of $B_i$ and $W_i$, there exists $y_i\in J^+(x_i)\cap A$ such that
\begin{align}
     d(x_i,y_i)\leq \frac{1}{m} \text{ and } \psi(y_i)+d^p(x_i,y_i)\geq \varphi(x_i)-\frac{1}{m} \label{jidoadpoa}
\end{align}
and furthermore, there exists a maximizing geodesic $\gamma_i:[0,1]\to M$ connecting $x_i$ to $y_i$ with
\begin{align}
    \(x_i,\frac{\dot \gamma_i(0)}{|\dot \gamma_i(0)|_h}\)\in W_i. \label{ujnuzhbgtfc}
\end{align}

\noindent \textbf{Step 4: Estimating the distances:}
We claim: \emph{$x_{i+1}$ and $y_i$ are always causally related and 
\begin{align*}
    d(x_{i+1},y_i)^2\geq d(x_i,y_i)^2+\frac{\tilde C^{-2}}{6}(r_{i+1}-r_i).
\end{align*}
}

We postpone the proof to the end. 
\\

\noindent \textbf{Step 5: Conclusion:} 
 \eqref{eqc} follows from from \eqref{jidoadpoa}. According to Step 1, it suffices to prove \eqref{eqb}. 
 
 Since $r_{i+1}-r_i\geq \frac{\delta_*}{4m}$ it follows from Step 4 that
\begin{align*}
    d(x_{i+1}, y_i)^2
    \geq
    d(x_i,y_i)^2 + \frac{\tilde C^{-2} \delta_*}{24m}.
\end{align*}
This together with (III) gives
\begin{align*}
    &1+\varphi(x_m)-\varphi(x_1)+\sum_{i=1}^{m-1} \Big(-d^p(x_{i+1},y_i)+d^p(x_i,y_i)\Big)
    \\[10pt]
    \leq
    &\, 1+2R-
    \sum_{i=1}^{m-1} \(\Big(d^2(x_{i},y_i)+\frac{\tilde C^{-2}\delta_*}{24m}\Big)^\frac p2 -d^2(x_i,y_i)^\frac p2\).
\end{align*}
Since
\[
(a+b)^q -a^q \geq  q\, b (a+b)^{q-1} \text{ whenever } 0<q\leq 1 \text{ and } a,b\geq 0,
\]
and since $d(x_i,y_i)\leq \frac 1m$,
 this expression is less or equal to
\begin{align*}
    &1+2R 
     -(m-1) \frac p2\frac{\tilde C^{-2}\delta_*}{24m} \Big(\frac{1}{m^2}+\frac{\tilde C^{-2}\delta_*}{24m}\Big)^\frac{p-2}2 
     \\[10pt]
     &\leq\, 1+2R - \frac{p\, \tilde C^{-2}\delta_*}{96} \left(\frac{1}{m^2}+\frac{\tilde C^{-2}\delta_*}{24m}\right)^\frac{p-2}2<0,
\end{align*}
where in the last step we used (V). This proves \eqref{eqb}, and this is a contradiction according to Step 1.
\bigskip

\noindent \textbf{Proof of Step 4:}
Fix $i=1,...,m-1$. There exists a maximizing geodesic $\gamma_i:[0,1]\to M$ connecting $x_i$ to $y_i$ such that \eqref{ujnuzhbgtfc} holds. Since $y_i\in A$ and $x_i\in U$, (I) implies $d_h(x_i,y_i)\geq C^{-1}$. Let us choose the first $t_i\in [0,1]$ with $d_h(x_i,\gamma_i(t_i))=C^{-1}$. By Lemma \ref{lemma}(iii)(2), we have
\[
\bar y_i:=\gamma_i(t_i)\in V.
\]
Using the reverse triangle inequality and the fact that $\gamma_i$ is maximizing, we obtain
\begin{align*}
d(x_i,y_i)=d(x_i,\bar y_i)+d(\bar y_i,y_i) \text{ and } d(x_{i+1},y_i)\geq d(x_{i+1},\bar y_i)+d(\bar y_i,y_i).
\end{align*}
Thus, if we can show that
\begin{align}
    d(x_{i+1},\bar y_i)^2\geq d(x_i,\bar y_i)^2+\frac{\tilde C^{-2}}{6}(r_{i+1}-r_i) \label{eqrefa}
\end{align}
then we automatically have $d(x_{i+1},\bar y_i)\geq d(x_i,\bar y_i)$ and it also follows
\begin{align*}
    d(x_{i+1},y_i)^2&\geq d(x_{i+1},\bar y_i)^2+2d(x_{i+1},\bar y_i)d(\bar y_i,y_i)+d(\bar y_i,y_i)^2
    \\
    &\geq d(x_i,\bar y_i)^2+\frac{\tilde C^{-2}}{6}(r_{i+1}-r_i).
\end{align*}
In particular, $x_{i+1}$ and $y_i$ are causally related and the claim is proven. Thus, it remains to prove \eqref{eqrefa}.
\\

From Lemma \ref{lemma}(ii) we infer that, if $\exp_{\bar y_i}^{-1}(x_{i+1})\in -\C_{\bar y_i}$, then $x_{i+1}$ and $\bar y_i$ are causally related and 
\[
d(x_{i+1},\bar y_i)=|\exp_{\bar y_i}^{-1}(x_{i+1})|_g.
\]
Therefore, we need to compute $f(x_{i+1},\bar y_i)$ for the function
\begin{align*}
    f:V\times V\to \R,\ f(x,y):=-g(\exp_y^{-1}(x),\exp_y^{-1}(x)).
\end{align*} 
Clearly, $f$ is a smooth map thanks to the convexity of $V$.
Using first order Taylor-expansion of the map $h:=f(\cdot,\bar y_i)\circ \exp_{x_*}$ at the point $u_i\in T_{x_*}M$, we obtain for some $u=(1-t)u_i+tu_{i+1}$ $(t\in (0,1))$\footnote{Note that $((1-s)u_i+su_{i+1})\in B_{\delta_*|u_*|_h}\subseteq \op{dom}(h)\  \forall s\in [0,1]$ thanks to (IV)\textsubscript{2} and \eqref{eqbh}, so that the mean value theorem used from the second to thrid line is applicable.}
\begin{align}
A&:=f(x_{i+1},\bar y_i) -f(x_i,\bar y_i) \nonumber
\\[10pt]
&=h(u_{i+1})-h(u_i) \nonumber
\\[10pt]
&= d_{u}h(u_{i+1}-u_i)\nonumber
\\[10pt]
&= \nonumber
-2\, g\Big(d_{u}\(\exp_{\bar y_i}^{-1}\circ \exp_{x_*}\)(u_{i+1}-u_i),\exp_{\bar y_i}^{-1}(\exp_{x_*}(u))\Big) \nonumber
\\[10pt]
&=-2(r_{i+1}-r_i)\, g\Big(d_{u}\(\exp_{\bar y_i}^{-1}\circ \exp_{x_*}\)\(u_*+\ep_i\),\exp_{\bar y_i}^{-1}(\exp_{x_*}(u))\Big),\nonumber
\\[10pt]
&=-2(r_{i+1}-r_i)\, k(\bar y_i,(x_*,u,u_*+\ep_i)).\label{ikmjuzhgt}
\end{align}
Here, we have set $(r_{i+1}-r_i)(u_*+\ep_i):=(u_{i+1}-u_i)$, and recall that $k$ denotes the map from Lemma \ref{lemma}(v). 

Let $\bar \gamma_i:[0,1]\to V$ be the geodesic reparametrization of the first part of $\gamma_i$ such that $\bar \gamma_i(1)=\bar y_i$. By Lemma \ref{lemma}(ii), $\bar \gamma_i$ is the unique maximizing geodesic (up to reparametrization) connecting $x_i$ to $\bar y_i$. Denote $e_j:=e_j(x_i,x_i)$ so that
\begin{align}
\bar v_i:=\dot {\bar \gamma}_i(0)=\exp_{x_i}^{-1}(\bar y_i)=\sum_{j=0}^n \mu_j e_j \text{ and } \bar w_i:=\pi(\bar v_i)=\sum_{j=1}^n \mu_j e_j \label{eqbk}
\end{align}
for some $\mu_j\in \R$.

\noindent \textbf{Claim:} We have
\[
|k(\bar y_i,(x_*,u,u_*+\ep_i))-k(\bar y_i,(x_i,0,|\bar v_i|_h^{-1}\bar w_i))| \leq \omega((C+3)\delta).
\]

\noindent \textbf{Proof of claim:}
From (IV)\textsubscript{1} and (II),
\begin{align}
|u|_h\leq (1-t)\, |u_i|_h+t\, |u_{i+1}|_h< \delta_*\, |u_*|_h <\delta \leq \ep \label{eqbi}
\end{align}
and by the triangle inequality, the definition of $u_*$ and \eqref{eqbh},
\begin{align*}
&d_{\tilde h}((x_i,|\bar v_i|_h^{-1}\bar w_i)),(x_*,u_*+\ep_i))
\\[8pt]
\leq\ &d_{\tilde h}\(\(x_i,\pi\Big(|\dot {\bar \gamma}_i(0)|_h^{-1}\dot {\bar \gamma}_i(0)\Big)\),(x_*,w_*)\)+d_{\tilde h}((x_*,w_*),(x_*,u_*))+d_{\tilde h}((x_*,u_*),(x_*,u_*+\ep_i)) 
\\[10pt]
=\ &d_{\tilde h}\(\pi\Big(x_i,|\dot \gamma_i(0)|_h^{-1}\dot \gamma_i(0)\Big),\pi(x_*,v_*)\)+2\delta. 
\end{align*}
Since $(x_i,|\dot \gamma_i(0)|_h^{-1}\dot \gamma_i(0)),(x_*,v_*)\in \overline W_{i}\subseteq \overline{T^1U}$ by \eqref{ujnuzhbgtfc}, and $\op{diam}^{\tilde h}(\overline W_{i})\leq  \delta$, we get from Lemma \ref{lemma}(iii)(1) and (II) that
\begin{align}
d_{\tilde h}((x_i,|\bar v_i|_h^{-1}\bar w_i),(x_*,u_*+\ep_i)) \label{eqbj}
\leq 
\op{Lip}(\pi_{|T^1U})\, \delta+2\delta\leq (C+2)\, \delta\leq \ep.
\end{align}
We use Lemma \ref{lemma}(v) together with \eqref{eqbi} and \eqref{eqbj} to conclude the proof of the claim.
\hfill \checkmark
\medskip

We can use the claim, the modulus $\omega$ from Lemma \ref{lemma}(v) and (II) to estimate \eqref{ikmjuzhgt} and we obtain
\begin{align}
    A\geq -2(r_{i+1}-r_i)\, k(\bar y_i,(x_i,0,|\bar v_i|_h^{-1}\bar w_i))-2(r_{i+1}-r_i)\, \frac{\tilde C^{-2}}{24} \label{eqbl}
\end{align}
By definition of $k$, we have
\begin{align}
k(\bar y_i,(x_i,0,|\bar v_i|_h^{-1}\bar w_i))=|\bar v_i|_h^{-1}g(d_{x_i}\exp_{\bar y_i}^{-1}(\bar w_i),\exp_{\bar y_i}^{-1}(x_i)).\label{eqbm}
\end{align}
Writing $\bar e_j:=e_j(x_i,\bar y_i)$, we know from Corollary \ref{cor} and \eqref{eqbk} that
\begin{align}
\exp_{\bar y_i}^{-1}(x_i)=\sum_{j=0}^n -\mu_j \bar e_j.\label{eqbn}
\end{align}
We insert \eqref{eqbm}, \eqref{eqbn} and \eqref{eqbk} into \eqref{eqbl} and obtain
\begin{align*}
    A
    \geq &-2(r_{i+1}-r_i)\, |\bar v_i|_h^{-1}\sum_{\substack{j=1,\\l=0}}^n \mu_j (-\mu_l) g(d_{x_i}\exp_{\bar y_i}^{-1}(e_j),\bar e_l)-(r_{i+1}-r_i)\, \frac{\tilde C^{-2}}{12}.
\end{align*}
Next we use (iii) of Lemma \ref{lemma} to deduce that 
\begin{align*}
    |g(d_{x_i}\exp_{\bar y_i}^{-1}(e_j),\bar e_l)-g(\bar e_j,\bar e_l)|\leq \frac1{2(5n+1)}.
\end{align*} 
Thus,
\begin{align*}
    A\geq 
    &-2(r_{i+1}-r_i)\, |\bar v_i|_h^{-1} \sum_{\substack{j=1,\\l=0}}^n \Big(\mu_j\,  (-\mu_l)\, g(\bar e_j,\bar e_l)\Big)
    \\[10pt]
    &-\frac{(r_{i+1}-r_i)\, |\bar v_i|_h^{-1}}{5n+1}\sum_{\substack{j=1,\\l=0}}^n |\mu_j|\, |\mu_l|-(r_{i+1}-r_i)\, \frac{\tilde C^{-2}}{12}.
\end{align*}
Using that $(\bar e_j)$ is an orthonormal basis in the tangent space $T_{\bar y_i}M$, we obtain
\begin{align*}
    A\geq 
     2(r_{i+1}-r_i)\, |\bar v_i|_h^{-1}\sum_{j=1}^n \mu_j^2 
   &-\frac{(r_{i+1}-r_i)\,|\bar v_i|_h^{-1}}{5n+1} \sum_{\substack{j=1,\\l=0}}^n |\mu_j|\, |\mu_l|
    \\
    &-(r_{i+1}-r_i)\,\frac{\tilde C^{-2}}{12}.
\end{align*}
By \eqref{eqbk}, \eqref{jidoadpoa}, the definition of $m$ and Lemma \ref{lemma}(iv), we have
\begin{align}
    \mu_0^2-\sum_{j=1}^n \mu_j^2=|\exp_{x_i}^{-1}(\bar y_i)|_g^2=d(x_i,\bar y_i)^2\leq \frac{1}{m^2}\leq \frac{\tilde C^{-1}}{2}\leq \frac{1}{2}\sum_{j=0}^n \mu_j^2. \label{eqbo}
\end{align}
 This yields
\begin{align*}
    \sum_{\substack{j=1,\\l=0}}^n |\mu_j|\, |\mu_l|
    \leq
    n\mu_0^2+(2n+1)\sum_{j=1}^n \mu_j^2 
    \leq
    (5n+1)\sum_{j=1}^n \mu_j^2.
\end{align*}
This gives
\begin{align*}
    A
    \geq (r_{i+1}-r_i)\, |\bar v_i|_h^{-1} \sum_{j=1}^n \mu_j^2 -(r_{i+1}-r_i)\, \frac{\tilde C^{-2}}{12}.
\end{align*}
In this inequality we insert both inequalities from Lemma \ref{lemma}(iv); the first one together with $|\bar v_i|_h^{-1}=|\exp_{x_i}^{-1}(\bar y_i)|_h^{-1}$, and the second one together with \eqref{eqbo}. This gives
\begin{align*}
    A
    \geq (r_{i+1}-r_i)\, \frac{\tilde C^{-2}}{4}-(r_{i+1}-r_i)\, \frac{\tilde C^{-2}}{12}
    \geq \frac{\tilde C^{-2}}{6}(r_{i+1}-r_i)> 0.
\end{align*}
Since $f(x_i,\bar y_i)=d(x_i,\bar y_i)^2$, we get
\[
    f(x_{i+1},\bar y_i)\geq d(x_i,\bar y_i)^2+A\geq  d(x_i,\bar y_i)^2 +
    \frac{\tilde C^{-2}}{6}(r_{i+1}-r_i) >0.
\]
Since $V$ is a convex set it follows that either $x_{i+1}< \bar y_i$ or $x_{i+1}> \bar y_i$. This almost proves the claim. However, we still need to argue why $x_{i+1}<\bar y_i$ and not $\bar y_i< x_{i+1}$. 

One can repeat the exact same argument for $x_{i+1}(t):=\exp_{x_*}((1-t)u_i+tu_{i+1})\in V$, $t\in (0,1]$, and prove that
\begin{align*}
-g_{\bar y_i}(\exp_{\bar y_i}^{-1}(x_{i+1}(t)),\exp_{\bar y_i}^{-1}(x_{i+1}(t)))> 0
\end{align*}
for all $t$. By reasons of continuity this shows that either $x_{i+1}(t)<\bar y_i$ for all $t\in (0,1]$ or that $\bar y_i< x_{i+1}(t)$ for all $t\in (0,1]$. Since $x_{i+1}(t)\to x_i< \bar y_i$ as $t\to 0$ we deduce that $x_{i+1}(t)>\bar y_i$ cannot be possible for small $t$. Thus we have proved that $x_{i+1}$ and $\bar y_i$ are causally related and that
\begin{align*}
d(x_{i+1},\bar y_i)^2 = f(x_{i+1},\bar y_i)\geq d(x_i,\bar y_i)^2+\frac{\tilde C^{-2}}{6}(r_{i+1}-r_i).
\end{align*}
This finally proves Step 4.
\hfill \checkmark
\end{proof}

\section{Proof of Corollary \ref{cor2}}\label{secd}

\begin{proof}[Proof of Corollary \ref{cor2}]
    
    Continuity of $d$, lower semicontinuity of $c$, and compactness of $\supp(\mu)$ and $\supp(\nu)$ assures finiteness of $C(\mu,\nu)$ and the existence of an optimal coupling $\pi$. Let $\varphi:M\to \overbar \R$ be any $\pi$-solution (the existence is guaranteed by Theorem 2.8 in \cite{Kell}). The fact that $\pi$ is concentrated on $I^+$ follows from Theorem 2.12 in \cite{Kell} or from Theorem \ref{thmc}. We prove the remaining claims:

    Let $\Omega_0\subseteq M$ be an open set of full $\mu$-measure on which $\varphi$ is locally semiconvex, and let $\Omega_1$ be the subset of all differentiability points of $\varphi$. Since $\mu\ll \op{vol}_g$, we have $\mu(\Omega_1)=1$. Consider the set $\partial_c\varphi\cap (\Omega_1\times M)\cap I^+$, which is of full $\pi$-measure. For any $(x,y)$ in this set, the function
    \[
    x'\mapsto \varphi(x')+c(x',y)
    \]
    attains its minimum at $x'=x$. Since $\varphi$ is differentiable at $x$ and $c(\cdot,y)$ is locally semiconcave in a neighbourhood of $x$ (since $y\in I^+(x)$, see Theorem \ref{A}), it follows as in (10.22) in \cite{Villani} that $c(\cdot,y)$ is differentiable at $x$ and that
    \[
    \nabla \varphi(x) +\nabla_xc(x,y)=0.
    \]
    Hence, $\pi$ satisfies \eqref{eqbp} $\pi$-a.e., and since the cost function satisfies the twist condition on $I^+$ (meaning that $\nabla_x c(x,\cdot)$ is injective on its domain of definition; cf.\ \cite{McCann2}, Lemma 3.1), $\pi$ is induced by the map
    \[
    T(x):=(\nabla_x c(x,\cdot))^{-1}(-\nabla \varphi(x)),
    \]
    cf.\ \cite{Ambrosio/Gigli}, Lemma 1.20.
    To prove that \eqref{eqbp} characterizes $\pi$, choose another coupling $\pi'$ that satisfies the equation $\pi'$-a.e. Then we have, for $\pi'$-a.e.\ $(x,y)$:
    \[
    y = (\nabla_x c(x,\cdot))^{-1}(-\nabla \varphi(x)).
    \]
    Hence, again by \cite{Ambrosio/Gigli}, Lemma 1.20, $\pi'$ is also induced by the map $T$, and it follows that $\pi'=\pi$. Hence, \eqref{eqbp} characterizes the optimal coupling. 

    Uniqueness of $\pi$ follows from the fact that we have started with an arbitrary optimal coupling $\pi$ and we showed that $\pi$ is induced by a map $T$. If $\pi'$ is another optimal coupling, it must be induced by a map $T'$. Now $\frac 12 (\pi+\pi')$ is also optimal, hence it is induced by a map as well. This is possible if and only if $T=T'$ $\mu$-a.e., that is, $\pi=\pi'$.
\end{proof}

\begin{figure}[htbp]
    \centering

    \begin{tikzpicture}

        \draw[->] (-4, 0) -- (4, 0); 
        \draw[->] (0, -4) -- (0, 4); 

        \def\angle{-45}

        \fill[gray!30, rotate=\angle, rounded corners=5pt] (-0.4,0) -- (4,0) -- (4,-1) -- (-0.4,-1) -- cycle;

        \fill[gray!50, rotate=\angle, rounded corners=5pt] (3.5,0) -- (4,0) -- (4,-1) -- (3.5,-1) -- cycle;


        -

        \fill (0, 0) circle (2pt); 
        \node at (0.4, -0.2) {$x_0$}; 
        \fill ({sqrt(8)-0.4},{-sqrt(8)-0.4}) circle (2pt); 
        \node at ({sqrt(8)-0.1},{-sqrt(8)-0.4}) {$x$}; 
        \fill (-1, 1) circle (2pt); 
        \node at (-1, 0.6) {$y_0$}; 
        \fill ({sqrt(8)},{sqrt(8)+1}) circle (2pt); 
        \node at ({sqrt(8)+0.3},{sqrt(8)+1}) {$y_1$}; 
        \fill ({sqrt(8)},{-sqrt(8)}) circle (2pt); 
        \node at ({sqrt(8)+0.3},{-sqrt(8)}) {$x_1$}; 

        \node at (0.2,-2) {$\mu$};
    
        \draw[black, thick, -{Latex[length=2mm, width=2mm]}] (0,0) -- (-1,1);
        \draw[black, thick, -{Latex[length=2mm, width=2mm]}] ({sqrt(8)-0.4},{-sqrt(8)-0.4}) -- ({sqrt(8)},{sqrt(8)+1});
    
    \end{tikzpicture}
    \caption{Optimal couplings are not supported on $I^+$.}
    \label{fig:meine-grafik}
\end{figure}

\begin{example}\rm\label{exa}
    We provide an example of two measure $\mu$, $\nu$ satisfying all the assumptions from Corollary \ref{cor2} but the optimal coupling is not supported on $I^+$. 
    
    Consider the two-dimensional Minkowski space and let $x_0:=(0,0)$, $y_0:=(1,-1)$. Consider a rotated rectangle $R$ (as in Figure \ref{fig:meine-grafik}) with upper right corner at $x_1:=(-3,3)$, upper left corner at $(-\ep,\ep)$ and corners rounded off smoothly. Let $y_1:=(4,3)$. Then we have $\supp(\mu)\subseteq I^-(y_1)$ and every point $x\in \supp(\mu)$ lies in $I^-(y_0)$ unless $x$ belongs to the upper edge of $R$. 

    Now let $g:\R^2\to \R$ be a smooth function such that $\{g>0\}=\op{int}(R)$ and $\int g(x)\, dx=1$. Moreover, suppose that 
    \[
        \int_{R'} g(x)\, dx>\frac 12,
    \]
    where $R'$ denotes the dark grey rectangle on the right-hand side of $R$. Let us define the probability measures 
    \[
        \mu:= g\, dx \text{ and } \nu:=\frac 12 (\delta_{y_0}+\delta_{y_1}).
    \]

    It is easy to verify that $(\mu,\nu)$ satisfies all the assumption of Corollary \ref{cor2} (a strictly timelike coupling is obtained by transporting an upper strip of $R$ to $y_1$, and the remaining part of $R$ to $y_0$). However, if $\pi$ denotes the (unique) optimal coupling, let us show that $(x_0,y_0)\in \supp(\pi)$ – in particular, $\supp(\pi)\not \subseteq I^+$.

    To see this, note first that $\supp(\pi)$ must be $c$-cyclically monotone, since $c$ is continuous on $J^+$ and $\supp(\mu)\times \supp(\nu)\subseteq J^+$ (cf.\ \cite{Villani}, Theorem 5.10(ii)).

    Suppose, for contradiction, that $(x_0,y_0)\notin \supp(\pi)$. Then necessarily $(x_0,y_1)\in \supp(\pi)$. By the assumptions on $g$, there must exist some $x\in R'$ which is transported to $y_0$ (that is, $(x,y_0)\in \supp(\pi)$). From the $c$-cyclical monotonicity, we obtain
    \begin{align}
        c(x_0,y_1)+c(x,y_0) \leq c(x_0,y_0)+c(x,y_1)=c(x,y_1). \label{eqd}
    \end{align}
    However, $c$ is continuous on $J^+$, and
    \[
    c(x_0,y_1)+c(x_1,y_0)= -7^\frac p2-0,
    \]
    whereas
    \[
        c(x_1,y_1)=-4^p.
    \]
    Thus, since $x$ is very close to $x_1$, \eqref{eqd} cannot hold. This is a contradiction. Hence, $(x_0,y_0)\in \supp(\pi)$, and therefore $\supp(\pi)$ is not supported on $I^+$.  
\end{example}

\section*{Acknowledgements}

I would like to thank both of my advisors, Markus Kunze and Stefan Suhr, for their helpful guidance during my PhD studies so far, and especially Stefan Suhr for the fruitful discussions about the results of this paper.

\section*{Statements and Declarations}

 \noindent \textbf{Conflict of interest:} None

\noindent \textbf{Data availability:} Apart from the references, this article does not use any external data.

\bibliography{OTaR_090625}

@book {Ambrosio,
    AUTHOR = {Ambrosio, Luigi and Gigli, Nicola and Savar\'e, Giuseppe},
     TITLE = {Gradient flows in metric spaces and in the space of
              probability measures},
    SERIES = {Lectures in Mathematics ETH Z\"urich},
 PUBLISHER = {Birkh\"auser Verlag, Basel},
      YEAR = {2005},
     PAGES = {viii+333},
      ISBN = {978-3-7643-2428-5; 3-7643-2428-7},
   MRCLASS = {49-02 (28A33 35K55 35K90 49Q20 60B10)},
  MRNUMBER = {2129498},
}

@article {Ambrosio/Mondino,
    AUTHOR = {Ambrosio, Luigi and Gigli, Nicola and Mondino, Andrea and
              Rajala, Tapio},
     TITLE = {Riemannian {R}icci curvature lower bounds in metric measure
              spaces with {$\sigma$}-finite measure},
   JOURNAL = {Trans. Amer. Math. Soc.},
  FJOURNAL = {Transactions of the American Mathematical Society},
    VOLUME = {367},
      YEAR = {2015},
    NUMBER = {7},
     PAGES = {4661--4701},
      ISSN = {0002-9947,1088-6850},
   MRCLASS = {49J52 (31C25 35K90 49Q20 58J35)},
  MRNUMBER = {3335397},
MRREVIEWER = {Paul\ Bryan},
       DOI = {10.1090/S0002-9947-2015-06111-X},
       URL = {https://doi.org/10.1090/S0002-9947-2015-06111-X},
}

@article {Ambrosio/Gigli/Savare1,
    AUTHOR = {Ambrosio, Luigi and Gigli, Nicola and Savar\'e, Giuseppe},
     TITLE = {Metric measure spaces with {R}iemannian {R}icci curvature
              bounded from below},
   JOURNAL = {Duke Math. J.},
  FJOURNAL = {Duke Mathematical Journal},
    VOLUME = {163},
      YEAR = {2014},
    NUMBER = {7},
     PAGES = {1405--1490},
      ISSN = {0012-7094,1547-7398},
   MRCLASS = {35R01 (60J45 60J65)},
  MRNUMBER = {3205729},
       DOI = {10.1215/00127094-2681605},
       URL = {https://doi.org/10.1215/00127094-2681605},
}

@article {Ambrosio/Gigli/Savare2,
    AUTHOR = {Ambrosio, Luigi and Gigli, Nicola and Savar\'e, Giuseppe},
     TITLE = {Calculus and heat flow in metric measure spaces and
              applications to spaces with {R}icci bounds from below},
   JOURNAL = {Invent. Math.},
  FJOURNAL = {Inventiones Mathematicae},
    VOLUME = {195},
      YEAR = {2014},
    NUMBER = {2},
     PAGES = {289--391},
      ISSN = {0020-9910,1432-1297},
   MRCLASS = {53C23 (31E05 35F21 58J35)},
  MRNUMBER = {3152751},
MRREVIEWER = {Nelia\ Charalambous},
       DOI = {10.1007/s00222-013-0456-1},
       URL = {https://doi.org/10.1007/s00222-013-0456-1},
}

@article {Ambrosio/Mondino/Savare,
    AUTHOR = {Ambrosio, Luigi and Mondino, Andrea and Savar\'e, Giuseppe},
     TITLE = {Nonlinear diffusion equations and curvature conditions in
              metric measure spaces},
   JOURNAL = {Mem. Amer. Math. Soc.},
  FJOURNAL = {Memoirs of the American Mathematical Society},
    VOLUME = {262},
      YEAR = {2019},
    NUMBER = {1270},
     PAGES = {v+121},
      ISSN = {0065-9266,1947-6221},
      ISBN = {978-1-4704-3913-2; 978-1-4704-5513-2},
   MRCLASS = {49Q20 (31C25 35K55 35K90 49J52 58J35 58J65)},
  MRNUMBER = {4044464},
MRREVIEWER = {Luca\ Granieri},
       DOI = {10.1090/memo/1270},
       URL = {https://doi.org/10.1090/memo/1270},
}

@article {Braun,
    AUTHOR = {Braun, Mathias},
     TITLE = {R\'enyi's entropy on {L}orentzian spaces. {T}imelike
              curvature-dimension conditions},
   JOURNAL = {J. Math. Pures Appl. (9)},
  FJOURNAL = {Journal de Math\'ematiques Pures et Appliqu\'ees. Neuvi\`eme
              S\'erie},
    VOLUME = {177},
      YEAR = {2023},
     PAGES = {46--128},
      ISSN = {0021-7824,1776-3371},
   MRCLASS = {53C50 (49J52 49Q22 58E10 58Z05 83C99)},
  MRNUMBER = {4629751},
MRREVIEWER = {S.\ M. B. Kashani},
       DOI = {10.1016/j.matpur.2023.06.009},
       URL = {https://doi.org/10.1016/j.matpur.2023.06.009},
}

@BOOK{Villani,
  title     = "Optimal Transport: Old and New",
  author    = "Villani, Cedric",
  publisher = "Springer",
  edition   =  2009,
  year      =  2008,
  address   = "Berlin",
  language  = "en"
}

@article {Gangbo/McCann,
    AUTHOR = {Gangbo, Wilfrid and McCann, Robert J.},
     TITLE = {The geometry of optimal transportation},
   JOURNAL = {Acta Math.},
  FJOURNAL = {Acta Mathematica},
    VOLUME = {177},
      YEAR = {1996},
    NUMBER = {2},
     PAGES = {113--161},
      ISSN = {0001-5962,1871-2509},
   MRCLASS = {49Q20 (28A99 35Q99 60B99 65K10)},
  MRNUMBER = {1440931},
MRREVIEWER = {Ludger\ R\"uschendorf},
       DOI = {10.1007/BF02392620},
       URL = {https://doi.org/10.1007/BF02392620},
}

@article {Kell,
    AUTHOR = {Kell, Martin and Suhr, Stefan},
     TITLE = {On the existence of dual solutions for {L}orentzian cost
              functions},
   JOURNAL = {Ann. Inst. H. Poincar\'e{} C Anal. Non Lin\'eaire},
  FJOURNAL = {Annales de l'Institut Henri Poincar\'e{} C. Analyse Non
              Lin\'eaire},
    VOLUME = {37},
      YEAR = {2020},
    NUMBER = {2},
     PAGES = {343--372},
      ISSN = {0294-1449,1873-1430},
   MRCLASS = {53C50 (49N15 49Q22 53C60)},
  MRNUMBER = {4072806},
MRREVIEWER = {Erasmo\ Caponio},
       DOI = {10.1016/j.anihpc.2019.09.005},
       URL = {https://doi.org/10.1016/j.anihpc.2019.09.005},
}

@article {Suhr,
    AUTHOR = {Suhr, Stefan},
     TITLE = {Theory of optimal transport for {L}orentzian cost functions},
   JOURNAL = {M\"unster J. Math.},
  FJOURNAL = {M\"unster Journal of Mathematics},
    VOLUME = {11},
      YEAR = {2018},
    NUMBER = {1},
     PAGES = {13--47},
      ISSN = {1867-5778,1867-5786},
   MRCLASS = {49Q20 (53C50 53C60)},
  MRNUMBER = {3873093},
MRREVIEWER = {S\l awomir\ Kolasi\'nski},
       DOI = {10.17879/87109580432},
       URL = {https://doi.org/10.17879/87109580432},
}

@article {Fathi/Figalli,
    AUTHOR = {Fathi, Albert and Figalli, Alessio},
     TITLE = {Optimal transportation on non-compact manifolds},
   JOURNAL = {Israel J. Math.},
  FJOURNAL = {Israel Journal of Mathematics},
    VOLUME = {175},
      YEAR = {2010},
     PAGES = {1--59},
      ISSN = {0021-2172,1565-8511},
   MRCLASS = {49Q20 (37J50)},
  MRNUMBER = {2607536},
MRREVIEWER = {Luca\ Granieri},
       DOI = {10.1007/s11856-010-0001-5},
       URL = {https://doi.org/10.1007/s11856-010-0001-5},
}

@article {Figalli/Gigli,
    AUTHOR = {Figalli, Alessio and Gigli, Nicola},
     TITLE = {Local semiconvexity of {K}antorovich potentials on non-compact
              manifolds},
   JOURNAL = {ESAIM Control Optim. Calc. Var.},
  FJOURNAL = {ESAIM. Control, Optimisation and Calculus of Variations},
    VOLUME = {17},
      YEAR = {2011},
    NUMBER = {3},
     PAGES = {648--653},
      ISSN = {1292-8119,1262-3377},
   MRCLASS = {49Q20},
  MRNUMBER = {2826973},
MRREVIEWER = {Luca\ Granieri},
       DOI = {10.1051/cocv/2010011},
       URL = {https://doi.org/10.1051/cocv/2010011},
}

@BOOK{ONeill,
  title     = "{Semi-Riemannian} geometry with applications to relativity:
               Volume 103",
  author    = "O'Neill, Barrett",
  publisher = "Academic Press",
  year      =  1983,
  address   = "San Diego, CA, USA",
  language  = "en"
}

@incollection {Ambrosio/Gigli,
    AUTHOR = {Ambrosio, Luigi and Gigli, Nicola},
     TITLE = {A user's guide to optimal transport},
 BOOKTITLE = {Modelling and optimisation of flows on networks},
    SERIES = {Lecture Notes in Math.},
    VOLUME = {2062},
     PAGES = {1--155},
 PUBLISHER = {Springer, Heidelberg},
      YEAR = {2013},
      ISBN = {978-3-642-32159-7; 978-3-642-32160-3},
   MRCLASS = {49Q20 (35R70 49-01)},
  MRNUMBER = {3050280},
MRREVIEWER = {Luca\ Granieri},
       DOI = {10.1007/978-3-642-32160-3\_1},
       URL = {https://doi.org/10.1007/978-3-642-32160-3_1},
}

@ARTICLE{McCann,
  title    = "Polar factorization of maps on Riemannian manifolds",
  author   = "Robert J. McCann",
  journal  = "GAFA Geom. Funct. Anal.",
  volume   =  11,
  number   =  3,
  pages    = "589--608",
  year     =  2001,
  language = "en"
}

@article {McCann2,
    AUTHOR = {McCann, Robert J.},
     TITLE = {Displacement convexity of {B}oltzmann's entropy characterizes
              the strong energy condition from general relativity},
   JOURNAL = {Camb. J. Math.},
  FJOURNAL = {Cambridge Journal of Mathematics},
    VOLUME = {8},
      YEAR = {2020},
    NUMBER = {3},
     PAGES = {609--681},
      ISSN = {2168-0930,2168-0949},
   MRCLASS = {53C50 (49Q22 53C21 58Z05 82C35 83C99)},
  MRNUMBER = {4192570},
       DOI = {10.4310/CJM.2020.v8.n3.a4},
       URL = {https://doi.org/10.4310/CJM.2020.v8.n3.a4},
}

@book {McCann3,
    AUTHOR = {McCann, Robert John},
     TITLE = {A convexity theory for interacting gases and equilibrium
              crystals},
      NOTE = {Thesis (Ph.D.)--Princeton University},
 PUBLISHER = {ProQuest LLC, Ann Arbor, MI},
      YEAR = {1994},
     PAGES = {163},
   MRCLASS = {99-05},
  MRNUMBER = {2691540},
       URL =
              {http://gateway.proquest.com/openurl?url_ver=Z39.88-2004&rft_val_fmt=info:ofi/fmt:kev:mtx:dissertation&res_dat=xri:pqdiss&rft_dat=xri:pqdiss:9505121},
}

@article{Minguzzi,
  title={Lorentzian causality theory},
  author={Ettore Minguzzi},
  journal={Living Reviews in Relativity},
  year={2019},
  volume={22},
  pages={1-202},
  url={https://api.semanticscholar.org/CorpusID:195343105}
}

@article{Brenier,
author = {Brenier, Yann},
title = {Polar factorization and monotone rearrangement of vector-valued functions},
journal = {Communications on Pure and Applied Mathematics},
volume = {44},
number = {4},
pages = {375-417},
doi = {https://doi.org/10.1002/cpa.3160440402},
url = {https://onlinelibrary.wiley.com/doi/abs/10.1002/cpa.3160440402},
eprint = {https://onlinelibrary.wiley.com/doi/pdf/10.1002/cpa.3160440402},
abstract = {Abstract Given a probability space (X, μ) and a bounded domain Ω in ℝd equipped with the Lebesgue measure |·| (normalized so that |Ω| = 1), it is shown (under additional technical assumptions on X and Ω) that for every vector-valued function u ∈ Lp (X, μ; ℝd) there is a unique “polar factorization” u = ∇Ψs, where Ψ is a convex function defined on Ω and s is a measure-preserving mapping from (X, μ) into (Ω, |·|), provided that u is nondegenerate, in the sense that μ(u−1(E)) = 0 for each Lebesgue negligible subset E of ℝd. Through this result, the concepts of polar factorization of real matrices, Helmholtz decomposition of vector fields, and nondecreasing rearrangements of real-valued functions are unified. The Monge-Ampère equation is involved in the polar factorization and the proof relies on the study of an appropriate “Monge-Kantorovich” problem.},
year = {1991}
}

@article{Bertrand/Puel,
  title={The optimal mass transport problem for relativistic costs},
  author={J{\'e}r{\^o}me Bertrand and Marjolaine Puel},
  journal={Calculus of Variations and Partial Differential Equations},
  year={2012},
  volume={46},
  pages={353 - 374},
  url={https://api.semanticscholar.org/CorpusID:254055184}
}

@article{Bertrand/Puel/Pratelli,
title = {Kantorovich potentials and continuity of total cost for relativistic cost functions},
journal = {Journal de Mathématiques Pures et Appliquées},
volume = {110},
pages = {93-122},
year = {2018},
issn = {0021-7824},
doi = {https://doi.org/10.1016/j.matpur.2017.09.005},
url = {https://www.sciencedirect.com/science/article/pii/S0021782417301307},
author = {Jerome Bertrand and Aldo Pratelli and Marjolaine Puel},
keywords = {Optimal transportation, Optimal map, Relativistic cost, Relativistic heat equation, Kantorovich potential, Chain lemma},
abstract = {In this paper we consider the optimal mass transport problem for relativistic cost functions, introduced in [12] as a generalization of the relativistic heat cost. A typical example of such a cost function is ct(x,y)=h(y−xt), h being a strictly convex function when the variable lies on a given ball, and infinite otherwise. It has been already proved that, for every t larger than some critical time T>0, existence and uniqueness of optimal maps hold; nonetheless, the existence of a Kantorovich potential is known only under quite restrictive assumptions. Moreover, the total cost corresponding to time t has been only proved to be a decreasing right-continuous function of t. In this paper, we extend the existence of Kantorovich potentials to a much broader setting, and we show that the total cost is a continuous function. To obtain both results the two main crucial steps are a refined “chain lemma” and the result that, for t>T, the points moving at maximal distance are negligible for the optimal plan.
Résumé
Nous considérons dans ce papier le problème de transport optimal pour des fonctions de coût relativistes, généralisation du coût correspondant à une propagation relativiste de la chaleur. Un exemple typique d'une telle fonction coût s'écrit c(t,x)=h(y−xt), h étant une fonction strictement convexe définie sur une boule et infinie en dehors de cette boule. L'existence d'une unique application de transport pour tout t supérieur à un temps critique T>0 est déja connue mais l'obtention des potentiels de Kantorovich correspondants était prouvée sous des hypothèses restrictives. Dans cet article, nous étendons l'existence de potentiels de Kantorovich à un cadre bien plus large et nous montrons que le coût total est une fonction continue. Les deux étapes cruciales pour obtenir ces résultats sont l'obtention d'un “chain lemma” élaboré et la preuve du fait que l'ensemble des points déplacés à distance maximale est négligeable.}
}

@article{Metsch1,
      title={Optimal transport and regularity of weak Kantorovich potentials on a globally hyperbolic spacetime}, 
      author={Alec Metsch},
      journal={arXiv preprint arXiv:2412.01012},
      year={2025},
      eprint={2412.01012},
      archivePrefix={arXiv},
      primaryClass={math.OC},
}

@article{Metsch3,
      title={On the locus of multiple maximizing geodesics on a globally hyperbolic spacetime}, 
      author={Alec Metsch},
      journal={arXiv preprint arXiv:2507.22737},
      year={2025},
      eprint={2507.22737},
      archivePrefix={arXiv},
      primaryClass={math.OC},
      url={https://arxiv.org/abs/2507.22737}, 
}

@article {Minguzi/Suhr,
    AUTHOR = {Minguzzi, E. and Suhr, S.},
     TITLE = {Lorentzian metric spaces and their {G}romov-{H}ausdorff
              convergence},
   JOURNAL = {Lett. Math. Phys.},
  FJOURNAL = {Letters in Mathematical Physics},
    VOLUME = {114},
      YEAR = {2024},
    NUMBER = {3},
     PAGES = {Paper No. 73, 63},
      ISSN = {0377-9017,1573-0530},
   MRCLASS = {53C50 (51F99 83C45)},
  MRNUMBER = {4752400},
MRREVIEWER = {S.\ M. B. Kashani},
       DOI = {10.1007/s11005-024-01813-z},
       URL = {https://doi.org/10.1007/s11005-024-01813-z},
}

@article {Mondino/Suhr,
    AUTHOR = {Mondino, Andrea and Suhr, Stefan},
     TITLE = {An optimal transport formulation of the {E}instein equations
              of general relativity},
   JOURNAL = {J. Eur. Math. Soc. (JEMS)},
  FJOURNAL = {Journal of the European Mathematical Society (JEMS)},
    VOLUME = {25},
      YEAR = {2023},
    NUMBER = {3},
     PAGES = {933--994},
      ISSN = {1435-9855,1435-9863},
   MRCLASS = {53C80 (49Q22 53C50 82C35 83C05)},
  MRNUMBER = {4577957},
MRREVIEWER = {Michael\ Kunzinger},
       DOI = {10.4171/jems/1188},
       URL = {https://doi.org/10.4171/jems/1188},
}

@article {Mondino/Cavalletti,
    AUTHOR = {Cavalletti, Fabio and Mondino, Andrea},
     TITLE = {Optimal transport in {L}orentzian synthetic spaces, synthetic
              timelike {R}icci curvature lower bounds and applications},
   JOURNAL = {Camb. J. Math.},
  FJOURNAL = {Cambridge Journal of Mathematics},
    VOLUME = {12},
      YEAR = {2024},
    NUMBER = {2},
     PAGES = {417--534},
      ISSN = {2168-0930,2168-0949},
   MRCLASS = {53C23 (49Q22 53C50 53C80 83C75)},
  MRNUMBER = {4779676},
       DOI = {10.4310/cjm.2024.v12.n2.a3},
       URL = {https://doi.org/10.4310/cjm.2024.v12.n2.a3},
}

@article{Sturm1,
author = {Karl-Theodor Sturm},
title = {{On the geometry of metric measure spaces}},
volume = {196},
journal = {Acta Mathematica},
number = {1},
publisher = {Institut Mittag-Leffler},
pages = {65 -- 131},
year = {2006},
doi = {10.1007/s11511-006-0002-8},
URL = {https://doi.org/10.1007/s11511-006-0002-8}
}

@article{Sturm2,
author = {Karl-Theodor Sturm},
title = {{On the geometry of metric measure spaces. II}},
volume = {196},
journal = {Acta Mathematica},
number = {1},
publisher = {Institut Mittag-Leffler},
pages = {133 -- 177},
year = {2006},
doi = {10.1007/s11511-006-0003-7},
URL = {https://doi.org/10.1007/s11511-006-0003-7}
}

@article {Lott/Villani,
    AUTHOR = {Lott, John and Villani, C\'edric},
     TITLE = {Ricci curvature for metric-measure spaces via optimal
              transport},
   JOURNAL = {Ann. of Math. (2)},
  FJOURNAL = {Annals of Mathematics. Second Series},
    VOLUME = {169},
      YEAR = {2009},
    NUMBER = {3},
     PAGES = {903--991},
      ISSN = {0003-486X,1939-8980},
   MRCLASS = {53C23 (49Q15)},
  MRNUMBER = {2480619},
MRREVIEWER = {Alessio\ Figalli},
       DOI = {10.4007/annals.2009.169.903},
       URL = {https://doi.org/10.4007/annals.2009.169.903},
}

@article {Eckstein/Miller,
    AUTHOR = {Eckstein, Micha\l{} and Miller, Tomasz},
     TITLE = {Causality for nonlocal phenomena},
   JOURNAL = {Ann. Henri Poincar\'e},
  FJOURNAL = {Annales Henri Poincar\'e. A Journal of Theoretical and
              Mathematical Physics},
    VOLUME = {18},
      YEAR = {2017},
    NUMBER = {9},
     PAGES = {3049--3096},
      ISSN = {1424-0637,1424-0661},
   MRCLASS = {83C05 (81T20)},
  MRNUMBER = {3685983},
MRREVIEWER = {Clemens\ Saemann},
       DOI = {10.1007/s00023-017-0566-1},
       URL = {https://doi.org/10.1007/s00023-017-0566-1},
}

@article {Erbar/Kuwada/Sturm,
    AUTHOR = {Erbar, Matthias and Kuwada, Kazumasa and Sturm, Karl-Theodor},
     TITLE = {On the equivalence of the entropic curvature-dimension
              condition and {B}ochner's inequality on metric measure spaces},
   JOURNAL = {Invent. Math.},
  FJOURNAL = {Inventiones Mathematicae},
    VOLUME = {201},
      YEAR = {2015},
    NUMBER = {3},
     PAGES = {993--1071},
      ISSN = {0020-9910,1432-1297},
   MRCLASS = {53C23},
  MRNUMBER = {3385639},
MRREVIEWER = {Fabrice\ Baudoin},
       DOI = {10.1007/s00222-014-0563-7},
       URL = {https://doi.org/10.1007/s00222-014-0563-7},
}

@article {Kunzinger/Sämann,
    AUTHOR = {Kunzinger, Michael and S\"amann, Clemens},
     TITLE = {Lorentzian length spaces},
   JOURNAL = {Ann. Global Anal. Geom.},
  FJOURNAL = {Annals of Global Analysis and Geometry},
    VOLUME = {54},
      YEAR = {2018},
    NUMBER = {3},
     PAGES = {399--447},
      ISSN = {0232-704X,1572-9060},
   MRCLASS = {53C23 (53B30 53C50 53C80)},
  MRNUMBER = {3867652},
MRREVIEWER = {Benjam\'in\ Olea},
       DOI = {10.1007/s10455-018-9633-1},
       URL = {https://doi.org/10.1007/s10455-018-9633-1},
}

@misc{Octet,
      title={A nonlinear d'Alembert comparison theorem and causal differential calculus on metric measure spacetimes}, 
      author={Tobias Beran and Mathias Braun and Matteo Calisti and Nicola Gigli and Robert J. McCann and Argam Ohanyan and Felix Rott and Clemens Sämann},
      year={2025},
      eprint={2408.15968},
      archivePrefix={arXiv},
      primaryClass={math.DG},
      url={https://arxiv.org/abs/2408.15968}, 
}
\end{document}